\setlist[enumerate]{label={\upshape(\roman*)}}
\newcommand{\GL}{\text{GL}} 
\newcommand{\Gal}{\mathrm{Gal}} 
\newcommand{\F}{\mathbb{F}}
\newcommand{\Q}{\mathbb{Q}}
\newcommand{\Z}{\mathbb{Z}}
\newcommand{\frakd}{\mathfrak{d}}
\newcommand{\frakD}{\mathfrak{D}}
\newcommand{\frakp}{\mathfrak{p}}
\newcommand{\calO}{\mathcal{O}}
\title{Discriminant-Stability in $p$-adic Lie Towers of Number Fields}
\author{James Upton}
\theoremstyle{plain}
\newtheorem{theorem}{Theorem}[section]
\newtheorem{lemma}[theorem]{Lemma}
\newtheorem{proposition}[theorem]{Proposition}
\newtheorem{conjecture}[theorem]{Conjecture}
\theoremstyle{definition}
\newtheorem{definition}[theorem]{Definition}
\newtheorem{example}[theorem]{Example}
\newtheorem{remark}[theorem]{Remark}
\def\blfootnote{\gdef\@thefnmark{}\@footnotetext}
\begin{document}
	\maketitle
	\begin{abstract}
		In this paper we consider a tower of number fields $\cdots \supseteq K(1) \supseteq K(0) \supseteq K$ arising naturally from a continuous $p$-adic representation of $\Gal(\bar{\Q}/K)$, referred to as a $p$-adic Lie tower over $K$. A recent conjecture of Daqing Wan hypothesizes, for certain $p$-adic Lie towers of curves over $\F_p$, a stable (polynomial) growth formula for the genus. Here we prove the analogous result in characteristic zero, namely: the $p$-adic valuation of the discriminant of the extension $K(i)/K$ is given by a polynomial in $i,p^i$ for $i$ sufficiently large. This generalizes a previously known result on discriminant-growth in $\Z_p$-towers of local fields of characteristic zero.
	\end{abstract}
	\section{Introduction}
	
	Consider a smooth projective curve $C$ over a finite field of characteristic $p$. Let $S$ be a finite set of closed points of $C$ and $U=C\backslash S$. Let $\rho:\pi_1^\text{arith}(U)\to \GL_r(\Z_p)$ be a $p$-adic Galois representation, so that the image of $\rho$ is a compact $p$-adic Lie group. The congruence subgroups modulo high powers of $p$ constitute a filtration of this group by open normal subgroups, referred to as a \emph{Lie filtration} (see Section \ref{s:Lie} for a more precise description). Via the Galois correspondence, we obtain a \emph{$p$-adic Lie tower} of smooth projective curves $\cdots \rightarrow C_1 \rightarrow C_0 \rightarrow C$, which ramifies in the finite set $S$. We are inspired by the recent conjectures of Daqing Wan \cite{Wan} describing the stable arithmetic properties of $p$-adic Lie towers coming from geometry. In particular, we are interested in the genus-stability conjecture:
	
	\begin{conjecture}\label{conj}
		Let $g_i$ denote the genus of $C_i$, and suppose the representation $\rho$ is algebraic-geometric and ordinary (as in \cite{Wan}). Then the sequence $(g_i)$ is given by a polynomial in $p^i$ for $i\gg 0$.
	\end{conjecture}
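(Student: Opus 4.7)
The plan is to reduce the computation of $g_i$ to a purely local ramification problem via the Riemann--Hurwitz formula, and then study the completed $p$-adic Lie tower at each ramified point. Let $n_i := [C_i : C]$ and let $\delta_i(P)$ denote the contribution to the degree of $\mathfrak{d}_{C_i/C}$ coming from points over $P \in S$. Riemann--Hurwitz gives
\[ 2g_i - 2 \;=\; n_i(2g_C - 2) \;+\; \sum_{P \in S} \delta_i(P). \]
The degree $n_i$ is the index in $G := \mathrm{Im}(\rho)$ of the $i$th subgroup $G(i)$ of the Lie filtration, and a standard computation with uniform pro-$p$ groups gives $n_i = c \cdot p^{i \dim G}$ for $i \gg 0$, a polynomial in $p^i$. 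Thus the conjecture reduces to the assertion that each local contribution $\delta_i(P)$ is eventually polynomial in $p^i$; since $S$ is finite, it suffices to treat each $P$ individually.

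Next, I would pass to completions at a fixed $P \in S$. The pullback of the tower is a $p$-adic Lie tower of local fields $\cdots \supseteq L_i \supseteq L_0 \supseteq L$, where $L$ is the completion of the function field of $C$ at $P$ (an equicharacteristic local field with finite residue field), and $\Gal(L_i/L) = H/H_i$, with $H \subseteq G$ the image of a decomposition subgroup at $P$ and $H_i := H \cap G(i)$. The local contribution $\delta_i(P)$ is then the product of the different exponent $d(L_i/L)$ with an explicit index of subgroups of $G$, which is itself polynomial in $p^i$ by the same uniform pro-$p$ computation. Here $d(L_i/L)$ is computed from the upper-numbering ramification filtration on $H/H_i$ via the Hasse--Arf theorem and the conductor--discriminant formula. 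The conjecture thus reduces to a quantitative statement about the ramification breaks of $L_i/L$: one wants them organized by the Lie filtration on $H$, so that integration yields a polynomial in $p^i$.

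The decisive step, and the principal obstacle, is precisely this local ramification analysis in equal characteristic $p$. In the mixed-characteristic setting, Sen's theorem provides a near-arithmetic-progression description of the upper-numbering breaks of a $p$-adic Lie extension, which makes polynomial growth of the different essentially automatic; this is the mechanism behind the author's main theorem in characteristic zero. No such general theorem is available in equal characteristic $p$: even in $\Z_p$-towers arising from Artin--Schreier--Witt constructions the breaks can grow very erratically. Controlling them requires using the hypothesis that $\rho$ is ordinary and algebraic-geometric to input global information, presumably through Wan's slope conjectures relating the Newton slopes of the associated $L$-functions to the ramification data of the underlying sheaf, in order to extract a quantitative Sen-type theorem in equal characteristic. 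Once the upper-numbering breaks of $L_i/L$ are shown to stabilize into a polynomial pattern in $p^i$, integrating against the Herbrand function and summing over $S$ yield the conjectured polynomial growth of $g_i$.
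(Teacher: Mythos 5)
The statement you were asked to prove is Conjecture~\ref{conj}, which the paper explicitly leaves \emph{open}: Theorem~\ref{main} proves only the characteristic-zero analogue, and the Remark following it notes that without the ``algebraic-geometric and ordinary'' hypotheses the characteristic-$p$ statement is actually false (citing Kosters--Wan), with the $\Z_p$-tower case of the conjecture attributed to forthcoming work of Kramer-Miller. So there is no proof in the paper to compare against, and you should not be expected to supply one.

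That said, your analysis is sound and accurately parallels the structure of the paper's characteristic-zero argument. The reduction you outline --- Riemann--Hurwitz to pass from genus to the degree of the different, then localization at the finitely many points of $S$, then polynomiality of $n_i$ via uniform pro-$p$ group theory --- is precisely the skeleton the paper uses in Sections~\ref{s:local}--\ref{s:proof} for number fields (discriminant related to different, different reduced to local ramification groups, global degree extracted from the Lie filtration). More importantly, you correctly identify the single step where the argument genuinely fails: Sen's theorem, which in mixed characteristic forces the ramification breaks of a $p$-adic Lie extension into a near-arithmetic progression, has no equal-characteristic analogue, and it is exactly this theorem that drives the integral computation in Section~\ref{s:proof}. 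In equal characteristic the breaks can grow erratically, which is why the unconditional statement is false and why the conjecture carries the ordinarity and algebraicity hypotheses; you gesture at Wan's slope framework as the likely source of the needed global constraint, which is consistent with the paper's own framing. This is a correct diagnosis of an open problem rather than a proof, and the diagnosis agrees with the paper's assessment of where the difficulty lies.
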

	
	In this paper we will formulate and prove a characteristic-0 analogue of Conjecture~\ref{conj}. Namely, we will consider a $p$-adic Lie tower of number fields $\cdots \supseteq K(1) \supseteq K(0) \supseteq K$ arising from a $p$-adic Galois representation of $\Gal(\bar{\Q}/K)$. In this setting there is no direct analogue of ``genus,'' but we may instead consider the $p$-adic growth of the discriminant (or equivalently, the different) of the extensions $K(i)/K$ as $i\to \infty$. Recall that in the function field case, these two quantities are related by the Riemann-Hurwitz formula (\cite{Oort}, Theorem 6.2):
	\begin{align*}
		2g_i-2	&=	[C_i:C_0](2g_0-2)-\sum_{P \in C_i}v_P(\frakD_{C_i/C_0}).
	\end{align*}
	
	\begin{definition}
		Let $\cdots	\supseteq	K(1)	\supseteq	K(0)	\supseteq K$ be a $p$-adic Lie tower of local or global fields of characteristic $0$. We say the tower is \emph{$p$-stable} (or when $p$ is clear, just \emph{stable}) if for every prime $\frakp$ in $K$ with residue characteristic $p$,
		\begin{align*}
			v_{\frakp}(\frakd_{K(i)/K})	&=	f(i,p^i)
		\end{align*}
		for $i\gg0$. Here $\frakd_{K(i)/K}$ denotes the discriminant of $K(i)/K$, and $f \in \Q[x,y]$ is some polynomial.
	\end{definition}
	
	Our main result is
	
	\begin{theorem}\label{main}
		Every $p$-adic Lie tower of local or global fields of characteristic $0$ is $p$-stable.
	\end{theorem}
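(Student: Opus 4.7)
I would reduce to a local, totally ramified tower whose Galois group is a uniform pro-$p$ group, and then apply Sen's theorem on ramification in $p$-adic Lie extensions to extract the claimed polynomial formula.

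\textbf{Reductions.} Since $K(i)/K$ is Galois, the primes of $K(i)$ over a fixed $\frakp$ form a single orbit of $\Gal(K(i)/K)$, so
\[
v_\frakp(\frakd_{K(i)/K}) \;=\; g_i \cdot v_\frakp(\frakd_{K(i)_\frakq/K_\frakp}),
\]
where $g_i$ is the number of such primes and $\frakq \mid \frakp$ is any choice. The decomposition subgroup $D_\frakp \leq G$ is closed, hence a $p$-adic Lie subgroup, and the completion at $\frakq$ yields a local $p$-adic Lie tower filtered by $D_\frakp \cap G(i)$. A dimension count on the Lie filtration shows that $g_i$ is itself a polynomial in $p^i$ for $i \gg 0$, reducing the problem to the local case. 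Locally, I would replace $K$ first by the fixed field of inertia (an unramified extension, contributing nothing to the different) and then by the fixed field of an open normal uniform pro-$p$ subgroup (available by Lazard), so that $G$ becomes totally ramified and uniform pro-$p$ of some dimension $d$. These finite base-changes affect $v_\frakp(\frakd_{K(i)/K})$ only by a controlled, polynomial-in-$p^i$ adjustment, so preserve $p$-stability.

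\textbf{Sen and computation.} Now I would apply Sen's theorem on ramification in $p$-adic Lie extensions: there exist constants $c, \alpha > 0$ such that $[G^u : G^{u+\alpha}] = p^d$ for every $u \geq c$, and the upper-numbering filtration $\{G^u\}_{u \geq c}$ is cofinal with the Lie filtration $\{G(i)\}$. Passing to $\Gal(K(i)/K) = G/G(i)$, the induced upper-numbering filtration $(G/G(i))^u = G^u G(i)/G(i)$ has jumps --- above the threshold $c$ --- occurring in an explicit arithmetic progression, with the group becoming trivial at $u \approx c + \alpha(i - i_0)$ for an explicit $i_0$. Plugging this into Serre's formula for $v_\frakp(\frakd_{K(i)/K})$ via upper-numbering ramification groups (and using Herbrand's $\psi_{K(i)/K}$ to pass between numberings), the contribution from $u < c$ is bounded independently of $i$, while the contribution from $u \in [c,\, c + \alpha(i - i_0)]$ is a geometric-type sum of length $O(i)$ whose terms scale like $p^{d(i-j)}$. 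Standard identities for $\sum_j j^k p^{dj}$ then evaluate this to a polynomial in $i$ and $p^i$.

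\textbf{Main obstacle.} The technical core is Sen's theorem and the subsequent accounting. Herbrand's function $\psi_{K(i)/K}$ depends on $i$, so transferring Sen's asymptotic description of $G^u$ to precise statements about each finite quotient $G/G(i)$, and correctly handling the boundary between the exceptional low-$u$ regime and the stable Sen regime, is where the real work lies. The global-to-local and reduction to uniform pro-$p$ steps should by contrast be essentially routine.
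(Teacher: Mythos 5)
Your overall strategy (reduce to a local, totally ramified tower; invoke Sen's theorem; compute the different via the upper-numbering ramification formula) matches the paper's. The reductions in your first paragraph are sound and essentially identical to the paper's Section~3 and Proposition~4.1. The place where your argument actually breaks down is the claim that the filtration $(G/G(i))^u = G^u G(i)/G(i)$ ``has jumps occurring in an explicit arithmetic progression'' above the threshold. Sen gives $G^{ie+c} \le G(i) \le G^{ie-c}$, which is a two-sided estimate, not an identity: the Lie filtration $G(i)$ is \emph{not} equal to any $G^v$. For $u \le ie-c$ you have $G^u \supseteq G(i)$ and the jumps track those of $G^u$ (periodic, by Sen), and for $u \ge ie+c$ the quotient is trivial; but in the transitional band $ie-c < u < ie+c$, $G^u$ and $G(i)$ are incomparable and $[G^u G(i):G(i)]$ is not controlled by Sen's asymptotics alone. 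The width of this band is bounded ($2c$), so its contribution to the integral is $O(1)$, but for $p$-stability you need that $O(1)$ term to \emph{stabilize} to an exact constant as $i \to \infty$ rather than merely remain bounded. This is precisely what Tate's earlier argument does not give, and it is the content you identify as the ``main obstacle'' without supplying a mechanism to resolve it.

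The paper resolves this by a device your proposal does not have: it introduces a \emph{second} $p$-adic Lie tower $K[i] = L^{G[i]}$ with $G[i] = G^{v_0 + ie}$, i.e.\ a Lie filtration whose terms are literally ramification groups. For this auxiliary tower the different integral $\int(1 - [G^v G[i]:G[i]]^{-1})\,dv$ has no transitional band at all (one has $G^v G[i] = G^v$ or $G[i]$ cleanly), and Sen's periodicity plus the shift isomorphism evaluate it to a closed form $e_{K[0]}\,ip^{di} + Ap^{di} + B$. The original tower is then handled by multiplicativity $\frakD_{K(i)/K} = \frakD_{K(i)/K[k_i]}\cdot \frakD_{K[k_i]/K}$ with $k_i = k_{i_0} + (i - i_0)$: the second factor is the stable auxiliary tower, and the first factor concerns the bounded extensions $K(i)/K[k_i]$ whose Galois groups $G[k_i]/G(i)$ are \emph{all isomorphic} via the $p^{i-i_0}$-power shift isomorphism, with ramification data related by an explicit Herbrand change of variable (Lemmas~5.4--5.5). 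That shift isomorphism is exactly what promotes the bounded transitional contribution to a genuinely stable one. Without introducing $K[i]$ (or some equivalent ``ramification-aligned'' interpolating tower), your plan as written cannot close the gap between ``bounded'' and ``eventually polynomial'' in the transition zone.
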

	
	In what follows we will see that Theorem~\ref{main} follows from the special case of a totally ramified tower of local fields. Thus in Section~\ref{s:local} we will reduce the problem to the local case, at which point we can state an explicit stability formula different. The proof proceeds in Section~\ref{s:proof}, where we reduce to the case of a totally ramified tower, and carry out the computation in terms of local ramification groups.
	
	\begin{remark}
		Tate previously considered the case of a totally ramified $\Z_p$-tower of local fields of characteristic $0$ in \cite{Tate}, deducing a polynomial formula for the discriminant up to $\calO(1)$. The stability of these towers was proven by Kosters in \cite{Kosters2}. As a consequence of the work of Kosters and Wan in \cite{Kosters}, the analogous characteristic-$p$ statement of Theorem~\ref{main} is known to be false in general. Thus the additional assumptions in Conjecture~\ref{conj} that the tower be algebraic-geometric and ordinary are not superfluous. The special case of Conjecture~\ref{conj} for $\Z_p$-towers of curves is proven in the forthcoming work of Joe Kramer-Miller.
	\end{remark}

	\section{Background on $p$-adic Lie Groups}\label{s:Lie}
	
		In this section we describe the construction of $p$-adic Lie towers over any field $K$, and the relevant background material on $p$-adic Lie groups. For a detailed reference, see~\cite{Dixon}. Let
		\begin{align*}
			\rho:	\Gal(K^s/K)	\to	\GL_n(\Z_p)
		\end{align*}
		be a continuous $p$-adic Galois representation, and let $L$ be the fixed field of $\ker(\rho)$. Then $G=\Gal(L/K)$ is a compact subgroup of $\GL_n(\Z_p)$, and this will give $G$ the structure of a $p$-adic Lie group (Theorem~\ref{t:subgroup} below). We will use the following algebraic characterization of such groups:
	
	\begin{definition}
		A pro-$p$ group $H$ is \emph{uniform} if
		\begin{enumerate}
			\item	$H$ is finitely generated
			\item	$H^p$ is a normal subgroup of $H$, and $H/H^p$ is abelian\footnote{When $p=2$, we require that $H/H^4$ is abelian. The proofs below are easily modified in this case.}
			\item	The $p^\text{th}$-power map induces a \emph{shift isomorphism}
				\begin{align*}
					H^{p^i}/H^{p^{i+1}}	\to	H^{p^{i+1}}/H^{p^{i+2}}.
				\end{align*}
		\end{enumerate}
		A topological group $G$ is a \emph{$p$-adic Lie group} if it admits an open, uniform subgroup $H$.
	\end{definition}
	
	Observe that for any uniform group $H$, the quotient $H/H^p$ is a finitely generated abelian group annihilated by $p$. By (3), it follows that
	\begin{align*}
		H^{p^i}/H^{p^{i+1}}	\cong	(\Z/p\Z)^d
	\end{align*}
	for all $i$ and some fixed $d$, which we refer to as the \emph{dimension} of $H$. If $G$ is a $p$-adic Lie group, then all open uniform subgroups of $G$ have the same dimension, which we refer to as the dimension of $G$.
	
	\begin{example}\label{e:gln}
		Suppose that $G=\GL_n(\Z_p)$, and let $\pi:G\to \GL_n(\Z/p\Z)$ be the reduction modulo $p$. Letting $H=\ker(\pi)$, we have that
		\begin{align*}
			H^{p^i}	&=	\ker(G\to	\GL_n(\Z/p^i\Z)).
		\end{align*}
		It is straightforward to verify that $H$ is an open, uniform subgroup of $G$, and that $\dim(G)=\dim(H)=n^2$.
	\end{example}
	
	We cite the following properties of $p$-adic Lie groups without proof:
	
	\begin{proposition}\label{props}~
		\begin{enumerate}
			\item	Let $H$ be a uniform pro-$p$ group of dimension $d$. Then for $i$ sufficiently large, and any $r\leq i+1$,
				\begin{align*}
					H^{p^i}/H^{p^{i+r}}	\cong (\Z/p^r\Z)^d,
				\end{align*}
				and the $p^\text{th}$ power map on $H$ induces an isomorphism
				\begin{align*}
					H^{p^i}/H^{p^{i+r}}	\to	H^{p^{i+1}}/H^{p^{i+1+r}}
				\end{align*}
			\item	Let $G$ be a compact $p$-adic Lie group. Then $G$ admits an open, normal uniform subgroup $H$, which necessarily has finite index in $G$.
		\end{enumerate}
	\end{proposition}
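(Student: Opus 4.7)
The plan is to prove part (i) by induction on $r$, and part (ii) by taking the normal core of an open uniform subgroup and then passing to a sufficiently deep term of its lower $p$-series.

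For (i), the base case $r=1$ is axiom (3) in the definition of uniform: once $i$ is large enough for the shift isomorphism to have stabilized the quotient to $(\Z/p\Z)^d$, that is the desired structure. For the inductive step from $r-1$ to $r$, I would apply the short exact sequence
\[
1 \to H^{p^{i+1}}/H^{p^{i+r}} \to H^{p^i}/H^{p^{i+r}} \to H^{p^i}/H^{p^{i+1}} \to 1,
\]
in which the left and right terms are $(\Z/p^{r-1}\Z)^d$ and $(\Z/p\Z)^d$ by induction and the base case respectively. The crux is to show the middle term is $(\Z/p^r\Z)^d$. Given a basis $\bar h_1,\ldots,\bar h_d$ of $H^{p^i}/H^{p^{i+1}}$, I would lift to $h_k \in H^{p^i}$ and use the shift isomorphism iteratively to show $h_k^{p^{r-1}} \in H^{p^{i+r-1}} \setminus H^{p^{i+r}}$, so that each $h_k$ has order exactly $p^r$ modulo $H^{p^{i+r}}$; a dimension count then identifies the subgroup they generate with the whole middle term. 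The second assertion — that $h\mapsto h^p$ induces an isomorphism $H^{p^i}/H^{p^{i+r}} \to H^{p^{i+1}}/H^{p^{i+r+1}}$ — then follows by checking surjectivity on the chosen generators (again via the shift iso) and comparing the common order $p^{rd}$.

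For (ii), let $H_0 \leq G$ be an open uniform subgroup, which exists by the definition of $p$-adic Lie group and has finite index in $G$ because $G$ is compact. The normal core $N=\bigcap_{g\in G}gH_0 g^{-1}$ is then an open normal subgroup of $G$ contained in $H_0$, but need not itself be uniform. However, each subgroup $N^{p^j}$ is characteristic in $N$, and hence normal in $G$. I would then invoke the standard fact (the content of Lazard's theory, as developed in \cite{Dixon}) that for any closed subgroup $N$ of a uniform pro-$p$ group, the groups $N^{p^j}$ are uniform for all sufficiently large $j$. Taking such a $j$ yields the desired open, normal, uniform subgroup, which has finite index in $G$ since it is open.

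The main obstacle in both parts is the implicit reliance on structural facts about uniform groups that are not proved from the bare definition given. For (i) one needs to know that $H^{p^i}/H^{p^{i+r}}$ is abelian, which amounts to the lemma that $H^{p^i}$ is itself uniform with lower $p$-series $\{H^{p^{i+j}}\}_j$; this rests on commutator estimates of the form $[H^{p^a},H^{p^b}]\subseteq H^{p^{a+b+1}}$ derived from $[H,H]\subseteq H^p$. For (ii) one needs the eventual uniformity of the lower $p$-series, a separate nontrivial result. Both are treated as black boxes here, which is why the proposition is cited without proof from \cite{Dixon}.
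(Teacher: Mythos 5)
The paper does not prove this proposition; it explicitly states ``We cite the following properties of $p$-adic Lie groups without proof,'' referring part (i) to the Lie group--Lie algebra correspondence (via \cite{Probst}) and part (ii) to \cite{Dixon}, Theorem~8.34. So there is no in-paper argument to compare against line by line; what can be assessed is whether your sketch is a viable route, and how it differs from the route the authors gesture at.

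Your outline is reasonable but, as you yourself flag, it is not self-contained, and the two black boxes you name are not the only ones. For part (i), the short-exact-sequence induction needs (a) abelianity of $H^{p^i}/H^{p^{i+r}}$ for $r\leq i+1$, which as you say comes from commutator estimates of the shape $[H^{p^a},H^{p^b}]\subseteq H^{p^{a+b+1}}$; but it also needs (b) that $(H^{p^i})^{p^s}=H^{p^{i+s}}$, i.e.\ that the filtration by $p$-power subgroups is self-consistent. This is automatic in the standard setup because for a uniform group the \emph{set} of $p^i$-th powers is already a closed subgroup and equals $H^{p^i}$; but it does not follow formally from the three axioms as stated in this paper, and without it the assertion that the lifted $h_k$ generate the Frattini quotient of $H^{p^i}/H^{p^{i+r}}$ (and hence the whole abelian group) is not justified. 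Once both (a) and (b) are granted, your argument goes through, and in fact you do not even need the ``order exactly $p^r$'' claim: an abelian $p$-group of order $p^{rd}$, exponent dividing $p^r$, and minimal generating set of size $d$ is forced to be $(\Z/p^r\Z)^d$ by the structure theorem. Note also that your group-theoretic induction is genuinely different from the source the paper points to, which obtains the statement by transporting the problem to a $\Z_p$-lattice $\mathfrak g$ via $\exp$, where $H^{p^i}/H^{p^{i+r}}\leftrightarrow p^i\mathfrak g/p^{i+r}\mathfrak g$ and the BCH series is congruent to addition modulo high powers of $p$ --- that is where the ``$i$ sufficiently large'' hypothesis naturally enters.

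For part (ii), the normal-core-then-descend strategy is the right one, but the specific lemma you invoke --- that $N^{p^j}$ is uniform for $j$ large whenever $N$ is a closed subgroup of a uniform group --- is not the form in which the fact appears in \cite{Dixon}, and is not obviously correct as stated, since $N$ need not be powerful and so $N^{p^j}$ need not coincide with the $j$-th term of its lower $p$-series $P_{j+1}(N)$. The statement that does hold, and is what the cited Theorem~8.34 rests on, is that every finitely generated pro-$p$ group has a \emph{characteristic} open uniform subgroup (equivalently, the lower $p$-series $P_n(N)$ is eventually powerful and then uniform). Taking that characteristic subgroup $U\leq N$ gives $U$ normal in $G$ because $N$ is, which is exactly the conclusion you want. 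So the gap in (ii) is not that a black box is used, but that the box you reach for is the wrong shape; you should replace $N^{p^j}$ by $P_j(N)$, or simply invoke the existence of a characteristic open uniform subgroup directly.
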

	\begin{remark}
		Statement (i) above is a consequence of the Lie group-Lie algebra correspondence, which is not discussed here. The correspondence is discussed in \cite{Probst} Section 3, and the statement is given later in Section 6.5. Statement (ii) is given in \cite{Dixon}, Theorem 8.34.
	\end{remark}
	
	Since the Galois group $G$ is always compact, it admits an open normal uniform subgroup $H$. Define a family of subgroups $G(i)	=	H^{p^i}.$ These are easily seen to be open and normal in $G$. Thus we obtain a filtration
	\begin{align*}
			G	&\geq	G(0)	\geq	G(1)	\geq	\cdots
	\end{align*}
	of $G$, which we refer to as a \emph{Lie filtration}. Via the Galois correspondence, there exists a decomposition of $L/K$ into a $p$-adic Lie tower
	\begin{align*}
		L	\supseteq	\cdots	\supseteq	K(1)	\supseteq	K(0)	\supseteq K.
	\end{align*}
	There is no canonical decomposition of the extension $L/K$, as these are in one-to-one correspondence with choices of $H$. In any case, we have that
	\begin{align*}
		[K(i):K]	&=	[K(i):K(0)][K(0):K]	=	p^{\dim(G)i}[K(0):K].
	\end{align*}
	
	Finally, we will prove the $p$-adic analogue of the classical ``closed-subgroup theorem'' for Lie groups. This ensures that the Galois group obtained above from a $p$-adic Galois representation is indeed a $p$-adic Lie group.
	
	\begin{theorem}\label{t:subgroup}
		Every closed subgroup $\Gamma$ of a $p$-adic Lie group $G$ is a $p$-adic Lie group. If $H$ is an open uniform subgroup of $G$, then $H^k\cap \Gamma$ is open and uniform in $\Gamma$ for $k$ sufficiently large, with associated Lie filtration
		\begin{align*}
			\Gamma	\geq	H^{p^k}\cap \Gamma	\geq	H^{p^{k+1}}\cap \Gamma	\geq	\cdots.
		\end{align*}
	\end{theorem}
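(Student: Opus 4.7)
The plan is to reduce to the case $G=H$ uniform via Proposition~\ref{props}(ii), and then analyze the descending chain of closed (hence open, since $H^{p^k}$ is open in $H$) normal subgroups $\Gamma_k := H^{p^k}\cap \Gamma$ of $\Gamma$. Since $H$ is open in $G$, $H\cap \Gamma$ is open in $\Gamma$, so we may replace $G$ by $H$ and $\Gamma$ by $H\cap\Gamma$, reducing to the case $G=H$ uniform of dimension $d$. The task is to exhibit some $k_0$ such that $\Gamma_{k_0}$ is a uniform pro-$p$ group, with Lie filtration $(\Gamma_{k_0+j})_{j\geq 0}$.

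The first step is to show that the graded quotients stabilize. By Proposition~\ref{props}(i), for $k$ large $H^{p^k}/H^{p^{k+1}}\cong (\Z/p\Z)^d$ and the $p$-th power map on $H$ induces the shift isomorphism between consecutive such quotients. Each $\Gamma_k/\Gamma_{k+1}$ injects into $H^{p^k}/H^{p^{k+1}}$, so is elementary abelian of some $\F_p$-dimension $d_k \leq d$. Since $p$-th powering carries $\Gamma_k$ into $\Gamma_{k+1}$ and is injective modulo $H^{p^{k+2}}$, the induced map $\Gamma_k/\Gamma_{k+1} \to \Gamma_{k+1}/\Gamma_{k+2}$ is injective. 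Hence $(d_k)$ is non-decreasing and bounded, stabilizing at some $d' \leq d$ for $k \geq k_0$; for such $k$ the $p$-th power induces an isomorphism on graded quotients.

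The heart of the argument is to upgrade this graded statement to the equality $\Gamma_{k_0}^{p^j} = \Gamma_{k_0+j}$ for all $j \geq 0$. The containment $\subseteq$ is automatic. The reverse is proved by an iterative $p$-th root construction: given $\gamma \in \Gamma_{k_0+1}$, choose $\delta_0 \in \Gamma_{k_0}$ with $\delta_0^p\equiv \gamma\pmod{\Gamma_{k_0+2}}$ using the graded isomorphism; correct by $\delta_1\in \Gamma_{k_0+1}$ so that $(\delta_0\delta_1)^p \equiv \gamma\pmod{\Gamma_{k_0+3}}$; and iterate. The key approximation $(\delta\eta)^p \equiv \delta^p\eta^p$ modulo sufficiently deep filtration rests on the commutator containments implicit in the uniformity of $H$, which push commutators of elements of $H^{p^a}$ and $H^{p^b}$ into $H^{p^{a+b}}$. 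The partial products $\delta_0\delta_1\cdots\delta_n$ form a Cauchy sequence in the compact group $\Gamma_{k_0}$, whose limit $\delta$ satisfies $\delta^p=\gamma$. I expect this convergence step to be the main technical obstacle, as it requires careful bookkeeping of higher commutator terms.

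With the equalities $\Gamma_{k_0}^{p^j} = \Gamma_{k_0+j}$ in hand, uniformity of $\Gamma_{k_0}$ is routine: it is topologically generated by $d'$ lifts of any basis of $\Gamma_{k_0}/\Gamma_{k_0+1}$, the quotient $\Gamma_{k_0}/\Gamma_{k_0}^p = \Gamma_{k_0}/\Gamma_{k_0+1}$ is elementary abelian, and the shift isomorphisms on its Lie filtration are inherited from those of $H$ established in step two.
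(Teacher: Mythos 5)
Your first two paragraphs reproduce the paper's argument exactly: pass to $\Gamma\cap H$ with $H$ uniform, show that the $p$-th power map $\Gamma_i/\Gamma_{i+1}\to\Gamma_{i+1}/\Gamma_{i+2}$ (factored through the shift isomorphism $H^{p^i}/H^{p^{i+1}}\to H^{p^{i+1}}/H^{p^{i+2}}$) is injective, and use the bound $d_i\le d$ to conclude that these injections stabilize into isomorphisms for $i\ge k_0$. Your third paragraph is where you go beyond the paper. The paper's proof stops after establishing finite generation and the graded isomorphism, implicitly concluding uniformity of $\Gamma_{k_0}$ and the identification $\Gamma_{k_0}^{p^j}=\Gamma_{k_0+j}$; but the definition of a uniform group is phrased in terms of the \emph{intrinsic} power subgroups $\Gamma_{k_0}^{p^j}$ rather than the ambient intersections $H^{p^{k_0+j}}\cap\Gamma$, so there is a step missing (one also needs $\Gamma_{k_0}/\Gamma_{k_0}^p$ abelian and $\Gamma_{k_0}^p$ normal, which your identification makes immediate). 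Your iterative $p$-th-root (Hensel-type) construction is the right device for closing the gap: surjectivity on graded pieces together with the uniformity commutator estimate $[H^{p^a},H^{p^b}]\subseteq H^{p^{a+b+1}}$ pushes the error $(\delta\eta)^p(\delta^p\eta^p)^{-1}$ deep enough into the filtration, yielding a Cauchy sequence of approximate roots converging in the compact group $\Gamma_{k_0}$. You are right to flag the commutator bookkeeping as the delicate step, and you should also mind the $p=2$ caveat (the footnoted modification of the definition of uniform). A tempting shortcut is to cite that $p$-th powering $H^{p^i}\to H^{p^{i+1}}$ is a bijection for uniform $H$, but that only produces a root in $H^{p^i}$; you would still need an argument of your iterative shape to see that the root lies in $\Gamma$. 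In short, you follow the paper's route for the graded computation and then supply genuinely necessary work that the paper leaves unstated.
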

	
	\begin{proof}
		For brevity, let $\Gamma(i)=H^{p^i}\cap \Gamma$. Then $\Gamma(i)$ is a closed subgroup of the uniform group $H$. This suffices to deduce that $\Gamma(i)$ is finitely generated, as in \cite{Dixon} Theorem 3.8. We aim to show that the $p^\text{th}$ power map induces an isomorphism
		\begin{align*}
			\Gamma(i)/\Gamma(i+1)	\to \Gamma(i+1)/\Gamma(i+2)
		\end{align*}
		for $i\gg 0$. Consider the composition
		\begin{align*}
			\Gamma(i)/\Gamma(i+1)	\cong	H^{p^{i+1}}(H^{p^i}\cap \Gamma)/H^{p^{i+1}}	\hookrightarrow	H^{p^i}/H^{p^{i+1}}	\xrightarrow{p}	H^{p^{i+1}}/H^{p^{i+2}},
		\end{align*}
		where the first map is the second isomorphism theorem, and the final map is the $p^\text{th}$-power isomorphism. It is straightforward to see that the image lies in $H^{p^{i+2}}(H^{p^{i+1}}\cap \Gamma)/H^{p^{i+2}}\cong \Gamma(i+1)/\Gamma(i+2)$, and so we obtain an injection
		\begin{align*}
			\Gamma(i)/\Gamma(i+1)	\hookrightarrow \Gamma(i+1)/\Gamma(i+2).
		\end{align*}
		Since each of these subquotients is isomorphic to a subgroup of $H^{p^i}/H^{p^{i+1}} \cong (\Z/p\Z)^{\dim(G)}$, they must stabilize for $i\gg 0$, after which the injections become isomorphisms.
	\end{proof}
	
	When $G$ is a closed subgroup of $\mathrm{GL}_n(\mathbb{Z}_p)$ (as in the case of $\mathrm{Gal}(L/K)$ above), Theorem \ref{t:subgroup} and Example \ref{e:gln} imply that the subgroups
	\begin{equation*}
		\ker(G\to	\mathrm{GL}_n(\mathbb{Z}/p^i\mathbb{Z}))
	\end{equation*}
	eventually form a Lie filtration of $G$, i.e. we can think of the Lie filtration as a family of congruence subgroups of $G$ modulo high powers of $p$.
	
	\section{Reduction to the Local Case}\label{s:local}
		
		We will now begin proving the main result by reducing the the case of a $p$-adic Lie tower of local fields of characteristic $0$. In particular we will show that Theorem~\ref{main} follows from a similar stability result for the different $\frakD_{K(i)/K}$, which can be computed using local ramification groups. Consider again the $p$-adic Lie tower of number fields
		\begin{align*}
			L	\supseteq	\cdots	\supseteq	K(1)	\supseteq	K(0)	\supseteq K.
		\end{align*}
		For a fixed prime $\frakp$ in $K$, choose a compatible system of primes $\frakp_i$ in $K(i)$ lying above $\frakp$. Recall that the different and discriminant are ideals in $\calO_{K(i)}$ and $\calO_K$, respectively, and are related by
		\begin{align*}
			N_{K(i)/K}\frakD_{K(i)/K}	&=	\frakd_{K(i)/K}.
		\end{align*}
		In particular, the $\frakp$-adic valuation of the discriminant can be written as
		\begin{align*}
			v_{\frakp}(\frakd_{K(i)/K})	&=	f_iv_{\frakp_i}(\frakD_{K(i)/K}),
		\end{align*}
		where $f_i=f(\frakp_i|\frakp)$ is the residue field degree. We will see that we can rewrite the $f_i$ in terms of the ramification indices $e_i=e(\frakp_i|\frakp)$. Consider the tower obtained after passing to completions
		\begin{align*}
			\cdots	\supseteq	K(1)_{\frakp_1}	\supseteq	K(0)_{\frakp_0}	\supseteq K_{\frakp}
		\end{align*}
		Write $L_\frakp$ for the compositum of this tower. Let $G_\frakp=\Gal(L_\frakp/K_\frakp)$ and similarly $G_{\frakp_i}=\Gal(L_\frakp/K_{\frakp_i})$ (note these are all open subgroups of $G_\frakp$). The following Lemma guarantees that $G_\frakp$ also has the structure of a $p$-adic Lie group:
		
		\begin{lemma}
			There is a closed embedding $\iota:G_\frakp \hookrightarrow G$ such that
			\begin{enumerate}
				\item	The image of $G_\frakp$ is the subgroup
					\begin{align*}
						\Gamma	&=	\{\sigma \in G:\sigma|_{K(i)}(\frakp_i)=\frakp_i\text{ for all $i$}\}.
					\end{align*}
				\item	The image of $G_{\frakp_i}$ is $\Gamma \cap G(i)$.	
			\end{enumerate}
		\end{lemma}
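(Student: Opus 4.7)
The plan is to construct $\iota$ as a compatible family of restrictions, exploiting the fact that each finite subextension $K(i)/K$ is Galois and that $L = \bigcup_i K(i)$ (this uses that $\bigcap_i G(i) = \bigcap_i H^{p^i} = \{1\}$ for a uniform pro-$p$ group $H$, together with the Galois correspondence).

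First I would note that for every $i$, the extension $K(i)/K$ is finite Galois (since $G(i) \trianglelefteq G$), so by the classical theory of decomposition groups the completion $K(i)_{\frakp_i}/K_\frakp$ is a finite Galois extension whose Galois group is the decomposition subgroup of $\frakp_i$ inside $G/G(i) = \Gal(K(i)/K)$. In particular $L_\frakp = \bigcup_i K(i)_{\frakp_i}$, and $L_\frakp/K_\frakp$ is Galois. Given $\sigma \in G_\frakp$, its restriction $\sigma_i := \sigma|_{K(i)_{\frakp_i}}$ is an element of $\Gal(K(i)_{\frakp_i}/K_\frakp)$ which fixes $\frakp_i$; further restricting to the dense subfield $K(i)$ gives a compatible family $(\sigma|_{K(i)})_i \in \varprojlim_i \Gal(K(i)/K) = G$, and I define $\iota(\sigma)$ to be this inverse limit. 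Continuity is immediate from the topology of inverse limits, and compactness of $G_\frakp$ then forces $\iota$ to be a closed map.

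For injectivity: if $\iota(\sigma)$ is trivial then $\sigma$ fixes every $K(i)$ pointwise, hence (by continuity and density of $K(i)$ in its completion) fixes every $K(i)_{\frakp_i}$ pointwise, hence fixes $L_\frakp = \bigcup K(i)_{\frakp_i}$ pointwise. For the image containment $\iota(G_\frakp) \subseteq \Gamma$, each $\sigma_i$ fixes $\frakp_i$ by construction. Conversely, given $\tau = (\tau_i) \in \Gamma$, the element $\tau_i \in \Gal(K(i)/K)$ lies in the decomposition group of $\frakp_i$, so it extends uniquely to a continuous automorphism $\tilde{\tau}_i$ of $K(i)_{\frakp_i}$ over $K_\frakp$; uniqueness of the extension together with the compatibility of the $\tau_i$ under restriction forces the $\tilde{\tau}_i$ to be compatible, and they therefore glue to an element of $\Gal(L_\frakp/K_\frakp)$ mapping to $\tau$ under $\iota$. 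This proves (i).

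For (ii), an element $\sigma \in G_\frakp$ lies in $G_{\frakp_i} = \Gal(L_\frakp/K(i)_{\frakp_i})$ if and only if it fixes $K(i)_{\frakp_i}$ pointwise, if and only if its restriction to $K(i)$ is trivial, which (combined with (i)) is exactly the condition $\iota(\sigma) \in \Gamma \cap G(i)$. The main technical point — and really the only place where care is required — is the surjectivity argument in (i), which hinges on the unique extension of a decomposition-group element to the completion; everything else is formal manipulation with inverse limits and Galois correspondence.
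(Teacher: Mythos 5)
Your proposal is correct and follows essentially the same approach as the paper: define $\iota$ by restriction, establish injectivity and closedness from density and compactness, and prove surjectivity onto $\Gamma$ via the unique extension of a decomposition-group element of $\Gal(K(i)/K)$ to an automorphism of the completion $K(i)_{\frakp_i}$. Your phrasing in terms of decomposition groups and inverse limits is a minor cosmetic variant of the paper's phrasing in terms of isometries and density, and your explicit observation that $\bigcap_i G(i)=\{1\}$ (hence $L=\bigcup_i K(i)$) is a small clarification the paper leaves implicit.
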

		\begin{proof}
			Fix a system of compatible embeddings $K(i)\hookrightarrow K(i)_{\frakp_i}$, giving an embedding $L\hookrightarrow L_\frakp$. The map $\iota$ is then given by restriction to $L$. This is clearly a continuous homomorphism, and is injective by density of $L$ in $L_\frakp$. Since $G_\frakp$ is compact, its image is closed in $G$.
			
			For (i), note that any $\sigma \in G_\frakp$ must fix each $\frakp_i$, so that $\iota(G_\frakp)\subseteq \Gamma$. Conversely if $\tau \in \Gamma$, then $\tau|_{K(i)}$ is an isometry of $K(i)$ under the $\frakp_i$-adic metric. By density of $K(i)$ in $K(i)_{\frakp_i}$, this extends uniquely to an element $\tau_i \in \Gal(K(i)_{\frakp_i}/K_\frakp)$. By uniqueness and the fact that all such elements come from $\tau$, they must be compatible. Therefore the $\tau_i$ extend to an element of $G_\frakp$ whose restriction to $L$ is $\tau$. To see (ii), observe that $G_{\frakp_i}$ is precisely the subgroup of $G_\frakp$ fixing $K(i)_{\frakp_i}$. Again by density, any $\sigma \in G_\frakp$ fixes $K(i)_{\frakp_i}$ if and only if $\sigma$ fixes $K(i)$, i.e. the image of $\sigma$ lies in $G(i)$.
		\end{proof}
		
		In light of the Lemma and Theorem~\ref{t:subgroup}, $G_\frakp$ is a $p$-adic Lie group with associated Lie filtration $G_{\frakp_{k+i}}$ for some $k\geq 0$. Hence the completions $K(k+i)_{\frakp_{k+i}}$ form a $p$-adic Lie tower of local fields over $K_\frakp$. Under the above embedding $G_\frakp/G_{\frakp_i}$ is isomorphic to the decomposition group $D(\frakp_i|\frakp)=\Gamma G(i)/G(i)$, which has order $e_if_i$. Letting $d_\frakp=\dim(G_\frakp)$, we can rewrite $f_{k+i}$ as
		\begin{align*}
			\frac{|G_\frakp/G_{\frakp_{k+i}}|}{e_{k+i}}	=	\frac{[K(k+i)_{\frakp_{k+i}}:K_\frakp]}{e_{k+i}}	=	\frac{[K(k)_{\frakp_k}:K_\frakp]p^{d_\frakp i}}{e_{k+i}}	= \frac{e_kf_kp^{d_\frakp i}}{e_{k+i}}.
		\end{align*}
		Thus for $i>k$,
		\begin{align}\label{discDiff}
			v_{\frakp}(\frakd_{K(i)/K})	&=	f_iv_{\frakp_i}(\frakD_{K(i)/K})	 =	\frac{e_kf_kp^{d_\frakp(i-k)}}{e_{i}}v_{\frakp_i}(\frakD_{K(i)_{\frakp_i}/K_\frakp}).
		\end{align}
		Therefore the proof of Theorem~\ref{main} is reduced to the computation of local ramification groups. From this point forward, we no longer need to consider the global case, so we will drop the $\frakp$-subscripts, replacing $K$ by $K_\frakp$ and similarly for each $K(i),L$. We will write $v_K$ instead of $v_\frakp$ for the unique normalized valuation of $K$, and similarly for all other local fields. As discussed in the following section, the inertia subgroup $G^0$ of $G=\Gal(L/K)$ is a closed Lie subgroup, and we let $d=\dim(G^0)$. We will see as an immediate consequence of Theorem~\ref{ram} that there is a stable formula for the $e_i$ in terms of $p^i$. The full result will then follow from the following more explicit statement:
		
		\begin{theorem}\label{t:main2}With notation as above, for $i\gg 0$
				\begin{align*}
					v_{K(i)}(\frakD_{K(i)/K})	&=	Cip^{di}+Ap^{di}+B,
				\end{align*}
				where $A,B,C$ are constants. If $L/K$ is totally ramified, then $C=e_0$.
		\end{theorem}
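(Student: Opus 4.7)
I would organize the argument in three stages.

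\emph{Step 1 (reduction to totally ramified).} The fixed field $L^{G^0}$ is the maximal unramified subextension of $L/K$, and for $i \gg 0$ we have $G(i) \subseteq G^0$, hence $L^{G^0} \subseteq K(i)$. Transitivity of the different combined with $\frakD_{L^{G^0}/K} = \calO_{L^{G^0}}$ yields $v_{K(i)}(\frakD_{K(i)/K}) = v_{K(i)}(\frakD_{K(i)/L^{G^0}})$, and by Theorem~\ref{t:subgroup} the intrinsic Lie filtration of $G^0$ coincides with $(G(i))$ after an index shift. I may therefore replace $K$ by $L^{G^0}$ and assume $L/K$ is totally ramified, in which case $G = G^0$ and $e_0 = [K(0):K]$.

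\emph{Step 2 (recursion).} In the totally ramified case, Proposition~\ref{props}(i) gives $e(K(i+1)/K(i)) = [K(i+1):K(i)] = p^d$ for $i \gg 0$, and transitivity of the different yields
\begin{align*}
	v_{K(i+1)}(\frakD_{K(i+1)/K}) = v_{K(i+1)}(\frakD_{K(i+1)/K(i)}) + p^d\, v_{K(i)}(\frakD_{K(i)/K}).
\end{align*}
Writing $D_i = v_{K(i)}(\frakD_{K(i)/K})$ and $\delta_i = v_{K(i+1)}(\frakD_{K(i+1)/K(i)})$, direct substitution shows that $D_i = Cip^{di} + Ap^{di} + B$ for $i \gg 0$ is equivalent to $\delta_i = Cp^{d(i+1)} + B(1-p^d)$, i.e.\ to the existence of constants $\alpha, \beta$ with $\delta_i = \alpha p^{di} + \beta$ for $i \gg 0$. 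Solving the recursion backwards then gives $C = \alpha/p^d$, so the identification $C = e_0$ in the totally ramified case will follow once $\alpha = p^d e_0$ is established.

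\emph{Step 3 (ramification computation, main obstacle).} For $i \gg 0$ the extension $K(i+1)/K(i)$ is a totally ramified elementary abelian $p$-extension with Galois group $G(i)/G(i+1) \cong (\Z/p\Z)^d$, so in lower numbering
\begin{align*}
	\delta_i = \sum_{j \geq 0}\bigl(|(G(i)/G(i+1))_j| - 1\bigr).
\end{align*}
The core difficulty is uniform control of these ramification groups as $i$ varies. I would use the anticipated stable description of the upper numbering filtration $G^u$ of the $p$-adic Lie group $G$ (the content of Theorem~\ref{ram}), together with Herbrand's theorem to pass between upper and lower numbering, and the uniform shift isomorphism of Proposition~\ref{props}(i). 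The essential input---a Sen-type compatibility for $p$-adic Lie extensions---is that the upper numbering breaks of $G$ separating $G(i)$ from $G(i+1)$ scale by $p^d$ as $i$ increases, yielding $\delta_i = \alpha p^{di} + \beta$ for $i \gg 0$. The identification $\alpha = p^d e_0$ is then forced by tracking the lowest ramification break, which is inherited from the base layer $K(0)/K$.
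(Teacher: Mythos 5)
Your Step 2 is a correct and rather clean reformulation: the recursion $D_{i+1}=\delta_i+p^dD_i$ does reduce the theorem, in the totally ramified case, to showing that $\delta_i=v_{K(i+1)}(\frakD_{K(i+1)/K(i)})$ equals $\alpha p^{di}+\beta$ exactly for $i\gg 0$. But the two surrounding steps have genuine gaps. In Step 1, the reduction via $L^{G^0}$ only works when the unramified part of $L/K$ is finite, i.e.\ when $G^0$ is open in $G$. In general $G/G^0\cong\Gal(L^{G^0}/K)$ can be infinite (e.g.\ the tower can contain an unramified $\Z_p$-subextension), in which case $G(i)\subseteq G^0$ never holds and $L^{G^0}$ is not a local field, so you cannot "replace $K$ by $L^{G^0}$." This is exactly why the paper instead passes to $K'=\widehat{K^{\mathrm{unr}}}$ and proves Proposition~\ref{ram}: a ramification-preserving isomorphism $G^0\cong\Gal(L'/K')$ together with $v_{K(i)}(\frakD_{K(i)/K})=v_{K(i)'}(\frakD_{K(i)'/K'})$, which handles the case $\dim G^0<\dim G$ that your reduction misses.

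The more serious gap is Step 3, which is where the theorem actually lives, and where you only assert the conclusion. Sen's theorem gives an additive shift, $(G^v)^p=G^{v+e}$, of the upper-numbering filtration over $K$; it does not say that the breaks separating $G(i)$ from $G(i+1)$ "scale by $p^d$." The $p^{di}$-scaling appears only after converting upper numbering over $K$ to numbering relative to $K(i)$ via the Herbrand functions, and to get an exact identity $\delta_i=\alpha p^{di}+\beta$ (rather than the up-to-$\calO(1)$ statement already implicit in Tate/Sen) you must show that the $p$-power shift isomorphisms are compatible with the ramification filtration on the subquotients $G(i)/G(i+1)$ after an explicit re-indexing. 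This is nontrivial: as the paper notes, the shift isomorphism does not preserve ramification groups in general, and the needed compatibility is precisely the content of Lemma~\ref{lemmaa} together with the explicit piecewise description of $\psi_{K[k_i]/K}$ in Lemma~\ref{l:psi}, proved by first introducing the auxiliary tower $K[i]=L^{G[i]}$ with $G[i]=G^{v_0+ie}$ cut out by the ramification filtration itself (whose different is directly computable). Your proposal contains no substitute for this machinery, so the key claim is unproven. Finally, the identification $C=e_0$ is not "forced by the lowest ramification break inherited from $K(0)/K$": the leading coefficient arises because each new layer contributes roughly $e$ new upper-numbering breaks (Sen) stretched by the degree $[K(i):K]=[K(0):K]p^{di}$, as one sees in the paper's computation of $v_K(\frakD_{K[i]/K})=ie+v_0+1-\cdots$ followed by multiplication by $e_{K[i]/K}$; a correct determination of $\alpha$ would have to come out of the same explicit integral bookkeeping you have deferred.
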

	
	\section{Ramification Theory}\label{s:ram}
	
		In this section we discuss the ramification theory of local fields relevant to the proof of Theorem~\ref{t:main2} in Section~\ref{s:proof}. A more comprehensive account can be found in Chapter IV of~\cite{Serre}. Let $K$ denote any local field, and suppose temporarily that $L$ is a finite Galois extension of $K$. It is well known that $\calO_L$ is a free rank-1 $\calO_K$-algebra, so that $\calO_L=\calO_K[\alpha]$ for some $\alpha$. The valuation on $L$ determines a family of subgroups of $G=\Gal(L/K)$ called the \emph{(lower) ramification groups} of $L/K$:
		\begin{align*}
			G_r	&=	\{\sigma \in G:v_L(\sigma\alpha-\alpha)\geq r+1\},
		\end{align*}
		indexed by real $r\geq -1$. This is easily seen to be a descending filtration of $G$ by normal subgroups, independent of the choice of $\alpha$. The group $G_0$ is called the \emph{inertia group} of $G$, and we say the extension $L/K$ is \emph{totally ramified} if $G_0=G$ and \emph{unramified} if $G_0=1$. The \emph{ramification index} of $L/K$ is defined to be $e_{L/K}=|G_0|$.
		
		For any $H\leq G$, $L$ is Galois over $F=L^H$ with Galois group $H$. The ramification groups of $L/F$ and $L/K$ are related via
		\begin{align*}
			H_r	&=	G_r	\cap H.
		\end{align*}
		If additionally $H$ is normal in $G$, then $F$ is Galois over $K$ with Galois group $G/H$. There is \emph{a priori} no relationship between the ramification groups of $F/K$ and $L/K$, but we can obtain one after re-indexing: Define the function $\varphi:[-1,\infty)\to[-1,\infty)$ by
		\begin{align*}
			\varphi_{L/K}(r)	&=	\begin{cases}
					\int_{0}^r	\frac{dw}{[G_0:G_w]}	&	\text{if }v	\geq 0	\\
					v	&	\text{if }-1\leq v <	0
				\end{cases}.
		\end{align*}
		This is continuous and strictly increasing, with a continuous inverse which we denote by $\psi_{L/K}$. For any $v\geq -1$, we define the $v^\text{th}$ \emph{(upper) ramification group} of $L/K$ to be
		\begin{align*}
			G^v	&=	G_{\psi_{L/K}(v)}.
		\end{align*}
		Note that $G^v=G_v$ for $v\leq 0$, but these groups differ in general for larger $v$. The ramification groups in upper numbering satisfy the desired property with respect to quotients, namely if $H$ is normal in $G$, then
		\begin{align*}
			(G/H)^v	&=	G^vH/H.
		\end{align*}
		We can now extend the ramification groups to possibly infinite Galois extensions: given any $L$ Galois over $K$, let
		\begin{align*}
			\Gal(L/K)^v	&=	\varprojlim_{F/K\text{ finite}}	\Gal(F/K)^v.
		\end{align*}
		Compatibility with quotients guarantees that the $\Gal(F/K)^v$ form a projective system, so that this limit is well defined and agrees with the former in the finite case. It is immediate that the ramification groups are always closed, normal subgroups in $\Gal(L/K)$. If additionally the ramification groups are open in $\Gal(L/K)$ (i.e. they have finite index), we can extend the functions $\psi$ and $\varphi$ to the infinite case:
		\begin{align*}
			\psi_{L/K}(v)	&=	\begin{cases}
					\int_0^\infty[G:G^w]dw	&	\text{if }v	\geq 0	\\
					v	&	\text{if }-1\leq v <	0
				\end{cases}.
		\end{align*}
		This is again bijective, so we may define $\varphi_{L/K}=\psi_{L/K}^{-1}$.
		
		In light of the discussion in Section~\ref{s:local}, we are primarily interested in the behavior of the \emph{different} in certain towers of local fields. This is an arithmetic invariant of a finite extension $F/K$ obtained from the ramification groups, defined to be the ideal in $\calO_F$ with valuation\footnote{For the equivalence of this definition with the usual definitions of the different, see \cite{Fontaine}, Section 0.3.5.}
		\begin{align}
			v_F(\frakD_{F/K})	&=	e_{F/K}\int_{-1}^\infty(1-|\Gal(F/K)^v|^{-1})dv.\label{diff}
		\end{align}
		Note that this is well defined as $F/K$ is finite, and always integral. If we return to the case of a $p$-adic Lie tower,
		\begin{align*}
			L	\supseteq	\cdots	\supseteq	K(1)	\supseteq	K(0)	\supseteq K,
		\end{align*}
		then the different of $K(i)/K$ is given by
		\begin{align}
			v_{K(i)}(\frakD_{K(i)/K})	&=	e_{K(i)/K}\int_{-1}^\infty(1-|\Gal(K(i)/K)^v|^{-1}dv \nonumber\\
				&=	e_{K(i)/K}\int_{-1}^\infty(1-|(G/G(i))^v|^{-1}dv	\nonumber\\
				&=	e_{K(i)/K}\int_{-1}^\infty(1-[G^vG(i):G(i)]^{-1}dv.	\label{diff2}
		\end{align}
		Thus, our goal is to understand the relation between the Lie filtration $G(i)$ and the ramification groups $G^v$. Before proving Theorem~\ref{main}, we cite some necessary information regarding ramification in local fields:
		
		\begin{theorem}\label{t:diff}
			Let $F/K$ be a finite Galois extension contained in $L$.
			\begin{enumerate}
				\item	$\varphi_{L/K}	=	\varphi_{F/K}\circ \varphi_{L/F}\text{ and }\psi_{L/K}	=	\psi_{L/F}\circ \psi_{F/K}$.
				\item	If $L/K$ is finite, then $\frakD_{L/K}=\frakD_{L/F}\frakD_{F/K}$.
			\end{enumerate}
		\end{theorem}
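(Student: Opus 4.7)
The plan is to handle the two parts separately; both are classical results (see~\cite{Serre}, Chapters III and IV), and the strategy in each case is to reduce to a finite Galois subextension.

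For part (i), I would first reduce to the case where $L/K$ is finite: since $\varphi$ and $\psi$ for infinite Galois extensions are defined via compatible inverse limits over finite subextensions, and the compatibility $(G/H)^v = G^v H/H$ ensures the relevant projective systems are maps of inverse systems, the finite-case identity propagates to the limit. In the finite case, observe that $\varphi_{L/K}$ and $\varphi_{F/K}\circ \varphi_{L/F}$ are both continuous, piecewise-linear, and agree at $r=-1$, so it suffices to verify they have the same right derivative outside the discrete set of jump points. By definition the left side has derivative $1/[G_0 : G_r]$ with $G = \Gal(L/K)$. By the chain rule the right side has derivative $(1/[H_0 : H_r])\cdot(1/[(G/H)_0 : (G/H)_{\varphi_{L/F}(r)}])$, where $H=\Gal(L/F)$. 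Equating the two reduces the claim to the counting identity
$$[G_0 : G_r] \;=\; [H_0 : H_r]\cdot [(G/H)_0 : (G/H)_{\varphi_{L/F}(r)}],$$
which follows directly from $H_r = G_r\cap H$ and Herbrand's theorem $(G/H)^v = G^v H/H$. The identity $\psi_{L/K} = \psi_{L/F} \circ \psi_{F/K}$ then follows by inversion, since $\psi_{L/K} = \varphi_{L/K}^{-1}$ by construction.

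For part (ii), the cleanest route is via the codifferent, which one can characterize as $\frakD_{L/K}^{-1} = \{x\in L : \mathrm{Tr}_{L/K}(x\calO_L)\subseteq \calO_K\}$. Using transitivity of the trace, $\mathrm{Tr}_{L/K} = \mathrm{Tr}_{F/K}\circ \mathrm{Tr}_{L/F}$, the defining condition $\mathrm{Tr}_{L/K}(x\calO_L)\subseteq \calO_K$ unwinds to $\mathrm{Tr}_{L/F}(x\calO_L)\subseteq \frakD_{F/K}^{-1}$, equivalently $x\frakD_{F/K}\subseteq \frakD_{L/F}^{-1}$, equivalently $x \in \frakD_{L/F}^{-1}\frakD_{F/K}^{-1}$. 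Inverting gives $\frakD_{L/K} = \frakD_{L/F}\frakD_{F/K}$.

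The only delicate step is the derivative calculation in part (i): one must justify Herbrand's theorem carefully and verify that the computation holds uniformly as $r$ crosses the countably many jump points of $\varphi_{L/F}$, where the piecewise-linear structure of both sides changes. Everywhere else the comparison is a direct counting argument. Since both assertions are very standard and thoroughly documented, in the paper it may simply suffice to cite~\cite{Serre}.
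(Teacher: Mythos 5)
The paper does not actually prove this theorem: it simply cites Fontaine (Prop.\ 0.65, with the infinite case discussed in the subsequent section of that reference) for part (i), and Serre (Ch.\ III, \S4) for part (ii). Your sketch is a faithful reconstruction of the classical arguments contained in those references, and both parts are mathematically correct. For part (i), the derivative comparison reduces to the counting identity $(G_0:G_u)=(G_0\cap H:G_u\cap H)(G_0H:G_uH)$, which follows from $|G_u|=|G_u\cap H|\cdot|G_uH|/|H|$ together with $H_u=G_u\cap H$ and Herbrand's theorem $(G/H)_{\varphi_{L/F}(u)}=G_uH/H$; your statement captures exactly this. The codifferent/trace-transitivity argument for part (ii) is likewise standard and correct. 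One small imprecision: you say $\varphi$ and $\psi$ for infinite extensions are ``defined via compatible inverse limits over finite subextensions,'' but in the paper they are defined by a direct integral $\psi_{L/K}(v)=\int_0^v [G:G^w]\,dw$, applicable whenever the ramification groups $G^w$ are open. The passage from finite to infinite is still routine (for fixed $v$ the index $[\Gal(F/K):\Gal(F/K)^w]$ stabilizes at $[G:G^w]$ for $F$ large enough, so $\psi_{F/K}(v)\to\psi_{L/K}(v)$), but it is not literally an inverse-limit definition. This does not affect the correctness of your argument. Given that the result is genuinely classical, citing Serre and Fontaine as the paper does is the economical choice, but your sketch would serve as a self-contained substitute.
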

		\begin{proof}
			The first statement is given in \cite{Fontaine} Proposition 0.65, with the infinite case discussed in the following section. Statement (ii) can be found e.g. in \cite{Serre} III. Section 4.
			\end{proof}
		
	\section{Proof of the Main Result}\label{s:proof}
	
		In this section we will proceed with the proof of Theorem~\ref{t:main2}. Suppose again that $K$ is a local field of characteristic $0$, and that $L/K$ is Galois with $G=\Gal(L/K)$ a $p$-adic Lie group. Fix a Lie filtration $G(i)$ of $G=\Gal(L/K)$. Set $e=e_{K/\Q_p}$ and $d=\dim(G^0)$. As a first step, we will reduce to the case where $L/K$ is totally ramified. Letting $K^\text{unr}$ denote the maximal unramified extension of $K$, define
		\begin{align*}
			K'	&=	\widehat{K^\text{unr}},~~K(i)'=\widehat{K(i)K^\text{unr}},~~L'=\bigcup_i	K(i)',
		\end{align*}
		(where the hat denotes completion). We now have a totally ramified tower of local fields
		\begin{align*}
			L'	\supseteq	\cdots	K(1)'	\supseteq	K(0)'	\supseteq	K'.
		\end{align*}
		
		\begin{proposition}\label{ram}
			Let $G'=\Gal(L'/K')$. Then
			\begin{enumerate}
				\item	There is an isomorphism $\varphi:G^0\to G'$ preserving ramification groups, that is
					\begin{align*}
						(G')^v	=	\varphi(G^v).
					\end{align*}
				\item	$G'$ is a $p$-adic Lie group, and there is a Lie filtration
					\begin{align*}
						G(i)'=\Gal(L'/K(k+i)')
					\end{align*}
					for a fixed $k$. Thus the $K(k+i)'$ form a $p$-adic Lie tower.
				\item	For all $i$,
					\begin{align*}
						v_{K(i)}(\frakD_{K(i)/K})	=	v_{K(i)'}(\frakD_{K(i)'/K'}).
					\end{align*}
					In particular, if the tower $K(k+i)'$ is stable, then so is $K(i)$.
			\end{enumerate}
		\end{proposition}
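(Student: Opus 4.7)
For part (i), my plan is to exhibit $\varphi$ as a restriction map and invoke standard local field theory. The fixed field of $G^0$ is the maximal unramified subextension of $L/K$, namely $L \cap K^{\text{unr}}$. Since $L/(L \cap K^{\text{unr}})$ is totally ramified while $K^{\text{unr}}/(L \cap K^{\text{unr}})$ remains unramified, $L$ and $K^{\text{unr}}$ are linearly disjoint over $L \cap K^{\text{unr}}$, yielding $G^0 \cong \Gal(LK^{\text{unr}}/K^{\text{unr}})$ by restriction. Passing to completions preserves Galois theory by Hensel's lemma, so $\Gal(LK^{\text{unr}}/K^{\text{unr}}) \cong \Gal(L'/K') = G'$, giving the desired $\varphi$. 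The preservation of upper-numbering ramification groups is then the classical statement that these groups are compatible with unramified base change and with completion, for which I would cite \cite{Serre}, Ch.~IV.

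For part (ii), I would apply Theorem~\ref{t:subgroup} directly to the closed subgroup $G^0 \leq G$: this produces an integer $k$ such that $G^0 \cap G(k+i)$ forms a Lie filtration of $G^0$. Transferring along $\varphi$, the subgroups $\varphi(G^0 \cap G(k+i))$ form a Lie filtration of $G'$. To identify these with $\Gal(L'/K(k+i)')$, I would observe that $K(k+i)'$ is the completion of $K(k+i) \cdot K^{\text{unr}}$, so an element of $G'$ fixes $K(k+i)'$ iff its preimage in $G^0$ fixes $K(k+i) \cdot K^{\text{unr}}$, which---since $G^0$ already fixes $K^{\text{unr}}$---reduces to fixing $K(k+i)$, i.e., lying in $G(k+i)$.

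For part (iii), I would apply the integral formula \eqref{diff} to both finite extensions $K(i)/K$ and $K(i)'/K'$ and verify that the integrands agree almost everywhere. The ramification indices match, since $e_{K(i)/K}$ equals the order of the inertia quotient $G^0 G(i)/G(i)$ of $\Gal(K(i)/K)$, while $e_{K(i)'/K'} = [K(i)':K']$ (as $L'/K'$ is totally ramified), and the two orders coincide via the identifications in (i)--(ii). For $v \geq 0$, the quotient formula $(G/H)^v = G^v H/H$ from Section~\ref{s:ram}, applied with $H = G(i)$ and combined with the ramification-preservation from (i), shows that both $|\Gal(K(i)/K)^v|$ and $|\Gal(K(i)'/K')^v|$ equal $[G^v : G^v \cap G(i)]$ (using $G^v \subseteq G^0$ for $v \geq 0$). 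For $v \in (-1,0)$, both groups collapse to their inertia subgroups, whose orders have already been shown equal. The resulting pointwise agreement of integrands on $(-1,\infty)$ then yields the claimed valuation identity.

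The main obstacle I anticipate is the ramification-preservation statement in (i): although the finite-level version is classical, ensuring that $(G')^v = \varphi(G^v)$ holds in the inverse limit (and hence that the upper-numbering filtration of the infinite extension $L'/K'$ matches that of $L/K$ restricted to $G^0$) requires some care with the compatibility of the $\varphi$-$\psi$ functions in the infinite case. Once this is established, parts (ii) and (iii) reduce essentially to bookkeeping with Theorem~\ref{t:subgroup} and the integral formula for the different.
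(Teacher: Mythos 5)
Your proposal is correct and follows the same overall strategy as the paper (construct $\varphi$ via restriction, apply Theorem~\ref{t:subgroup} to the closed subgroup $G^0$, then compare differents), but parts (i) and (iii) take slightly different tactical routes. In (i), the paper works one finite level at a time: it picks $\alpha$ with $\calO_F = \calO_{F\cap K^\text{unr}}[\alpha]$, notes that $\alpha$ also generates $FK^\text{unr}/K^\text{unr}$, and observes that the ramification filtration is literally defined by the Galois action on $\alpha$ — so preservation of ramification groups is built in rather than cited; the system of such isomorphisms is then compatible and passes to the limit. Your linear-disjointness argument plus an appeal to Serre is also fine, and your closing remark about needing care with $\varphi$-$\psi$ in the inverse limit is precisely the point the paper sidesteps by keeping the argument at finite levels. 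In (iii), the paper invokes Theorem~\ref{t:diff}(ii) to write $\frakD_{K(i)/K} = \frakD_{K(i)/K(i)\cap K^\text{unr}}\frakD_{K(i)\cap K^\text{unr}/K}$, discards the unramified (trivial) different, and matches $K(i)/K(i)\cap K^\text{unr}$ with $K(i)'/K'$ by identity of ramification groups; you instead compare the integrands of (\ref{diff}) directly. Both require the same key identification — that for $v\geq 0$ both $|\Gal(K(i)/K)^v|$ and $|\Gal(K(i)'/K')^v|$ equal $[G^v:G^v\cap G(i)]$, using $G^v\subseteq G^0$ — and both handle $-1<v<0$ by noting the inertia orders coincide, so neither buys a real simplification. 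One small thing to keep straight in your version: the identification $\Gal(L'/K(i)')\cong G^0\cap G(i)$ under $\varphi$ holds for every $i\geq 0$, independent of the index $k$ coming from Theorem~\ref{t:subgroup}, which is why the equality in (iii) holds for all $i$; the stability transfer at the end is then the only place the shift by $k$ matters.
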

		\begin{proof}
			\begin{enumerate}
				\item	Recall that $G^v$ is the inverse limit of $\Gal(F/K)^v$, where $F$ runs over finite intermediate Galois extensions of $L/K$. For any such $F$, there exists $\alpha$ such that $\calO_F=\calO_{F\cap K^\text{unr}}[\alpha]$. Then $FK^\text{unr}$ is generated over $K^\text{unr}$ by $\alpha$ as well. Thus there is an isomorphism
				\begin{align*}
					\Gal(F/K)^0	&\to	\Gal(FK^\text{unr}/K^\text{unr})	\\
					\sigma	&\mapsto	\hat{\sigma}
				\end{align*}
				where $\hat{\sigma}$ is determined by $\hat{\sigma}(\alpha)=\sigma(\alpha)$. Since the ramification groups are determined by the action of the Galois group on $\alpha$, this isomorphism preserves ramification groups. Passing to completion will not change the Galois group, and so this extends to an isomorphism $\Gal(F/K)^0\cong \Gal(\widehat{FK^\text{unr}/K^\text{unr}})$. It is straightforward to see that all such isomorphisms are compatible, and therefore extend to a continuous ramification-preserving isomorphism $\varphi:G^0\to G'$.
				
				\item	Since $G^0$ is a closed subgroup of $G$, $G^0$ is a $p$-adic Lie group by Theorem~\ref{t:subgroup}. It follows that there is some $k$ such that $G^0\cap G(k+i)$ is a Lie filtration of $G^0$. Under the isomorphism $\varphi$, we see that $G'$ is a $p$-adic Lie group and there is a Lie filtration
					\begin{align*}
						G'(i)	&=	\varphi(G^0	\cap G(k+i))	\\
							&=	\varphi(\Gal(L/K(k+i)K^\text{unr}))	\\
							&=	\Gal(L'/K(k+i)').
					\end{align*}
					It is worth noting at this point that for $i>k$, $e_i=[K(i)':K']=p^{d(i-k)}[K(k)':K']$ is also given by a polynomial in $p^i$. This implies that stability of the different and discriminant are equivalent, by (\ref{discDiff}).
				\item	Using Theorem~\ref{t:diff} (i), we see for $i>k$
					\begin{align*}
						v_{K(i)}(\frakD_{K(i)/K})	&=	v_{K(i)}(\frakD_{K(i)/K(i)\cap K^\text{unr}}\frakD_{K(i)\cap K^\text{unr}/K})	\\
							&=	v_{K(i)}(\frakD_{K(i)/K(i)\cap K^\text{unr}})+v_{K(i)}(\frakD_{K(i)\cap K^\text{unr}/K})	\\
							&=	v_{K(i)}(\frakD_{K(i)/K(i)\cap K^\text{unr}})	\\
							&=	v_{K(i)'}(\frakD_{K(i)'/K'}),
					\end{align*}
					Where the last equality holds since the two extensions have the same ramification groups. Finally, assume that $K(k+i)'$ is a stable tower. Then for $i\gg 0$ (and $i>k$),
					\begin{align*}
						v_{K(i)}(\frakD_{K(i)/K})	&=	v_{K(k+(i-k))'}(\frakD_{K(k+(i-k))'/K'})	\\
							&=	C(i-k)p^{d(i-k)}+Ap^{d(i-k)}+B	\\
							&=	\frac{C}{p^{dk}}ip^{di}+\frac{A-Ck}{p^{dk}}p^{di}+B.
					\end{align*}
			\end{enumerate}
		\end{proof}
		
		In light of Proposition~\ref{ram}, we replace $K$, $K(i)$, and $L$ with $K'$, $K(k+i)'$, and $L'$ respectively, reducing to the case of a totally ramified tower. In this situation, there is agreement between the Lie filtration $G(i)$ and the ramification groups of $G$:
		
		\begin{theorem} (Sen). There exists a constant $c$ such that for all $i\geq 0$,
				\begin{align*}
						G^{ie+c}\leq	G(i)	\leq	G^{ie-c}.
				\end{align*}
				Furthermore, $(G^v)^p=G^{v+e}$ for $v\gg 0$.
		\end{theorem}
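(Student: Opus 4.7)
The plan is to establish the second assertion $(G^v)^p = G^{v+e}$ first and then use it to derive the sandwich bound. The $p$-th power identity is the core of Sen's theorem: it encodes the compatibility between the $p$-adic Lie structure on $G$ and the local ramification filtration. Once it holds, the family $\{G^{v_0 + ie}\}_i$ becomes a Lie filtration of a sufficiently deep open subgroup of $G$, and the sandwich bound reduces to a commensurability statement between Lie filtrations.

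For the identity $(G^v)^p = G^{v+e}$, I would fix an open uniform subgroup $H$ of $G = G^0$ via Proposition~\ref{props} and work deep in its filtration, so that the $p$-adic logarithm $\log : H^{p^n} \to p^n \mathcal{L}_H$ is defined and inverts the exponential, where $\mathcal{L}_H$ denotes the associated $\Z_p$-Lie algebra of $H$. Under $\log$, the $p^{\text{th}}$-power map on $H$ corresponds to multiplication by $p$ on $\mathcal{L}_H$. The forward inclusion $(G^v)^p \subseteq G^{v+e}$ is then a short valuation estimate at the level of a finite intermediate extension $F/K$: a binomial expansion of $\sigma^p$ acting on a uniformizer of $F$, combined with $v_K(p) = e$, gives the required shift in upper numbering. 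The reverse inclusion $G^{v+e} \subseteq (G^v)^p$ is obtained by inversion: for $v$ large, every $\tau \in G^{v+e}$ is of the form $\exp(pX)$ for some $X \in \mathcal{L}_H$, and $\sigma := \exp(X) \in G^v$ satisfies $\sigma^p = \tau$.

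To deduce the sandwich bound, fix $v_0$ large enough that $(G^w)^p = G^{w+e}$ for all $w \geq v_0$ and $G^{v_0} \subseteq H$. Iterating gives $(G^{v_0})^{p^j} = G^{v_0 + je}$, from which the uniformity axioms for $G^{v_0}$ are straightforward. Its Lie filtration $\{G^{v_0 + je}\}_j$ must then be commensurable with any other Lie filtration of $G$: concretely, both $\{H^{p^i} \cap G^{v_0}\}_i$ and $\{G^{v_0 + je}\}_j$ are Lie filtrations of the uniform group $G^{v_0}$, with successive quotients $(\Z/p\Z)^d$, and a comparison argument via the common refinement $(H \cap G^{v_0})^{p^n}$ shows they differ by at most a bounded shift in the indices. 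For $i$ sufficiently large $G(i) \subseteq G^{v_0}$, and translating the shift inequality back yields $G^{ie+c} \leq G(i) \leq G^{ie-c}$ after absorbing $v_0$ and bounded terms into a single constant $c$; the finitely many small-$i$ values are handled by enlarging $c$ further.

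The principal obstacle is the reverse inclusion $G^{v+e} \subseteq (G^v)^p$ in the $p$-th power identity. The forward containment is a short valuation estimate, but producing a $p^{\text{th}}$-root of a given ramification-small element requires convergence of the $p$-adic exponential and logarithm on a neighborhood of the identity, which is a genuinely characteristic-zero feature of the problem. The failure of this convergence in characteristic $p$ is precisely the reason, alluded to in the remark after Theorem~\ref{main}, that the analogous ramification stability breaks down in positive characteristic.
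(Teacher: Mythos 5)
The paper does not prove this statement; it is cited as Sen's theorem (from Sen's 1972 paper on ramification in $p$-adic Lie extensions) and used as a black box. So there is no internal proof to compare against, and your proposal must be judged on its own merits.

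Your deduction of the sandwich bound from the $p$-th power identity is essentially sound: once $(G^v)^p = G^{v+e}$ holds for $v \geq v_0$, iterating gives $(G^{v_0})^{p^j} = G^{v_0+je}$, and one can check that $G^{v_0}$ is uniform (this is, in fact, exactly what the Lemma immediately after this theorem in the paper does), so $\{G^{v_0+je}\}_j$ is a second Lie filtration of $G$ which must be commensurable with $\{G(i)\}_i$ up to a bounded index shift. Your forward inclusion $(G^v)^p \subseteq G^{v+e}$ is also essentially correct: working in lower numbering in a finite quotient $F/K$, if $\sigma \in G_u$ with $u$ large, a valuation estimate on $\sum_{j\geq 1}\binom{p}{j}(\sigma-1)^j$ gives $\sigma^p \in G_{u+e_F}$, and since for large $v$ one has $\psi_{F/K}(v+e)-\psi_{F/K}(v)=e_F$, this translates to $\sigma^p \in G^{v+e}$.

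The genuine gap is the reverse inclusion $G^{v+e} \subseteq (G^v)^p$, which is the hard half of Sen's theorem. You write that for $\tau \in G^{v+e}$ one can take $X = \tfrac{1}{p}\log\tau$ and set $\sigma = \exp(X)$, so that $\sigma^p = \tau$, and then assert $\sigma \in G^v$. But nothing in the $\exp$--$\log$ formalism justifies that last assertion. The logarithm identifies the abstract group structure of a deep uniform subgroup with a $\Z_p$-lattice, and under that identification $p$-th powers correspond to multiplication by $p$; but the ramification filtration $\{G^v\}$ is defined entirely in terms of valuations of $\sigma\pi-\pi$, and there is no a priori reason that $\log$ carries $G^v$ to the expected sub-lattice. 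The alignment between the congruence/Lie filtration and the ramification filtration is precisely the content of the theorem, so invoking it at this step is circular. Sen's actual argument is substantially harder: it first establishes the result for $\Z_p$-extensions via the Hasse-Arf theorem and a detailed study of ramification breaks (showing the breaks eventually lie in an arithmetic progression with common difference $e$), and then reduces the general $p$-adic Lie case to the $\Z_p$ case using the structure theory of compact $p$-adic Lie groups. Your appeal to ``convergence of the $p$-adic exponential and logarithm'' is a necessary feature of characteristic zero but is far from sufficient to close this gap.
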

		
		It is immediate from Sen's theorem that
		\begin{align}\label{est}
			G\lceil\tfrac{v+c}{e}\rceil	\leq	G^v	\leq	G\lfloor\tfrac{v-c}{e}\rfloor,
		\end{align}
		which implies that the ramification groups have finite index in $G$, and hence are open. Our next step is to produce a second Lie filtration $G[i]$ of $G$ from the ramification groups.
		
		\begin{lemma}
			For some $v_0> 0$, $G^{v_0}$ is a uniform pro-$p$ group.
		\end{lemma}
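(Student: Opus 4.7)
The plan is to choose $v_0$ large enough that two things hold simultaneously: Sen's theorem gives $(G^v)^p=G^{v+e}$ for all $v\geq v_0$, and the sandwich estimate (\ref{est}) places $G^{v_0}$ inside some fixed open normal uniform subgroup $U\leq G$ furnished by Proposition \ref{props}(ii), deep enough that all relevant exponents lie in the stable range of Proposition \ref{props}(i) for $U$. Setting $H:=G^{v_0}$, repeated application of Sen then gives $H^{p^i}=G^{v_0+ie}$ for every $i\geq 0$; since these are ramification groups of $G$, they are normal in $G$ and a fortiori in $H$. The conditions that $H$ be pro-$p$ and finitely generated with $H^p$ normal in $H$ follow at once: $H$ is a closed subgroup of the $p$-adic Lie group $U$ (Theorem \ref{t:subgroup}), and $H^p=G^{v_0+e}$ is itself a ramification group.

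The remaining conditions---abelianness of $H/H^p$ and the shift isomorphism $H^{p^i}/H^{p^{i+1}}\to H^{p^{i+1}}/H^{p^{i+2}}$---I would extract by comparison with the ambient uniform $U$. Writing $a_j=\lfloor(v_0+je-c)/e\rfloor$ and $b_j=\lceil(v_0+je+c)/e\rceil$, the sandwich (\ref{est}) reads $U^{p^{b_j}}\subseteq H^{p^j}\subseteq U^{p^{a_j}}$, with the gap $b_j-a_j$ constant in $j$. Fixing $r$ larger than this gap, each $H^{p^j}$ for $j=i,i+1$ corresponds to an intermediate subgroup of the quotient $U^{p^{a_i}}/U^{p^{a_i+r}}\cong(\Z/p^r\Z)^d$ (the isomorphism being Proposition \ref{props}(i)). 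Since this quotient is abelian, the subquotient $H^{p^i}/H^{p^{i+1}}$ of it is abelian; specializing to $i=0$ gives $H/H^p$ abelian.

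For the shift isomorphism, Proposition \ref{props}(i) also provides a $p$-th power isomorphism $U^{p^{a_i}}/U^{p^{a_i+r}}\to U^{p^{a_i+1}}/U^{p^{a_i+1+r}}$. Its restriction to the subgroup $H^{p^i}/U^{p^{a_i+r}}$ maps isomorphically onto a subgroup of the codomain, which I would identify with $H^{p^{i+1}}/U^{p^{a_i+1+r}}$: by Sen, the set $\{x^p:x\in H^{p^i}\}$ topologically generates $H^{p^{i+1}}$, and since its image is already a subgroup of the finite abelian codomain, that image coincides with the subgroup it generates. A parallel identification of the image of $H^{p^{i+1}}/U^{p^{a_i+r}}$ with $H^{p^{i+2}}/U^{p^{a_i+1+r}}$ matches kernels, inducing the required isomorphism $H^{p^i}/H^{p^{i+1}}\to H^{p^{i+1}}/H^{p^{i+2}}$. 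The main obstacle is precisely this passage from Sen's closure-level identity $(H^{p^i})^p=H^{p^{i+1}}$ to the set-theoretic equality of subgroups needed for the shift isomorphism, and it becomes tractable only because the ambient quotient into which we embed everything is finite and abelian.
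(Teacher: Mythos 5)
Your proposal is correct and follows essentially the same route as the paper: use Sen's theorem to get $(G^{v_0})^{p^j}=G^{v_0+je}$, sandwich these between terms $G(a_j)\geq G^{v_0+je}\geq G(b_j)$ of the Lie filtration via the estimate (\ref{est}), and read off finite generation, abelianness of $H/H^p$, and the shift isomorphism from the corresponding properties of the ambient uniform group given by Proposition~\ref{props}. If anything, your treatment of the shift isomorphism---carefully identifying the image of $H^{p^i}$ under the ambient $p$-th power map with $H^{p^{i+1}}$ by combining Sen's closed-subgroup equality with the finiteness and abelianness of the target quotient---is spelled out more fully than the paper's terse ``the shift isomorphism then induces an isomorphism.''
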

		\begin{proof}
			We may assume $v$ sufficiently large so that $(G^v)^p=G^{v+e}$. As $G^v$ is a closed subgroup of the uniform group $G(0)$, $G^v$ is finitely generated. By Sen's theorem, we have a chain of inclusion
			\begin{align}\label{chain}
				G\lfloor \tfrac{v-c}{e}\rfloor	\geq	G^v	\geq	G^{v+e}	\geq	G\left(1+\lceil \tfrac{v+c}{e}\rceil\right).
			\end{align}
			Observe that $1+\lceil \tfrac{v+c}{e}\rceil-\lfloor \tfrac{v-c}{e}\rfloor\leq 2+2c/e$. We take $v_0$ sufficiently large so that $2+2c/e \leq \lfloor\tfrac{v_0-c}{e}\rfloor+1$ and the quotient
			\begin{align*}
				G\lfloor \tfrac{v_0-c}{e}\rfloor/G\left(1+\lceil \tfrac{v_0+c}{e}\rceil\right)
			\end{align*}
			is abelian (as in Proposition~\ref{props}). Thus $G^{v_0}/(G^{v_0})^p$ is abelian as well. The shift isomorphism then induces an isomorphism
			\begin{align*}
				G^{v_0+ke}/G^{v_0+(k+1)e}\to G^{v_0+(k+1)e}/G^{v_0+(k+2)e}
			\end{align*}
			for all $k\geq 0$.
		\end{proof}
		
		As in Section~\ref{s:Lie} above, a choice of open, uniform subgroup produces a Lie filtration of $G$. Letting $G[i]=G^{v_0+ie}$ of $G$, we obtain a second $p$-adic Lie tower $K[i]=L^{G[i]}$ over $K$. This decomposition has the advantage of being defined in terms of the ramification groups, allowing for a more direct computation of the different.
		
		\begin{proposition}
			The tower $K[i]/K$ is stable.
		\end{proposition}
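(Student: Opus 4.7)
The plan is to apply formula~(\ref{diff2}) directly to the tower $K[i]/K$, exploiting the fact that $G[i]=G^{v_0+ie}$ is itself a ramification subgroup of $G$ so that the quantity $[G^v G[i]:G[i]]$ degenerates into something we can integrate explicitly. Since we are now in the totally ramified setting, $e_{K[i]/K}=[G:G[i]]$, and (\ref{diff2}) reads
\begin{align*}
v_{K[i]}(\frakD_{K[i]/K})=[G:G[i]]\int_{-1}^{\infty}\bigl(1-[G^vG[i]:G[i]]^{-1}\bigr)\,dv.
\end{align*}
The ramification filtration $\{G^v\}$ is totally ordered by inclusion, so $G^v\supseteq G[i]$ iff $v\leq v_0+ie$, at which point $G^vG[i]=G^v$; for $v>v_0+ie$ the integrand vanishes. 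Combining this with the identity $[G:G[i]]\cdot[G^v:G[i]]^{-1}=[G:G^v]$ rewrites the different as
\begin{align*}
v_{K[i]}(\frakD_{K[i]/K})=[G:G[i]](v_0+ie+1)-\int_{-1}^{v_0+ie}[G:G^v]\,dv.
\end{align*}

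Since $[G:G[i]]=[G:G[0]]\cdot p^{di}$, the first summand is manifestly polynomial in $i$ and $p^i$. For the remaining integral I would split at $v_0$: the piece over $[-1,v_0]$ is a constant independent of $i$, so only the tail $\int_{v_0}^{v_0+ie}[G:G^v]\,dv$ is active. Partitioning this tail into the $i$ intervals $[v_0+je,v_0+(j+1)e)$ for $j=0,\ldots,i-1$ and substituting $v=v_0+je+t$, I would invoke Sen's relation $(G^v)^p=G^{v+e}$ (valid for $v\geq v_0$, where $G^v$ is uniform of dimension $d=\dim G^0$) to obtain iteratively $[G:G^{v_0+je+t}]=p^{dj}[G:G^{v_0+t}]$. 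The integral thus collapses to the geometric sum
\begin{align*}
\Bigl(\int_0^e[G:G^{v_0+t}]\,dt\Bigr)\sum_{j=0}^{i-1}p^{dj},
\end{align*}
which evaluates to a constant multiple of $(p^{di}-1)/(p^d-1)$. Assembling the pieces expresses $v_{K[i]}(\frakD_{K[i]/K})$ as a linear combination of $ip^{di}$, $p^{di}$, and $1$, giving the polynomial dependence required for stability.

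The only real obstacle is bookkeeping: one must verify that $G[i]=G[0]^{p^i}$ genuinely agrees with $G^{v_0+ie}$, and that $[G^v:G^{v+e}]=p^d$ throughout the range of integration. Both points hinge on $v_0$ being chosen large enough for Sen's relation and the uniform shift isomorphism to operate simultaneously, which is precisely how $v_0$ was selected in the preceding lemma. Once these alignments are in place, the argument is essentially a geometric-series evaluation that exploits the crucial compatibility between the Lie filtration $G[i]$ and the ramification filtration $G^v$ built into the construction.
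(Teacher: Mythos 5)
Your proposal is correct and follows essentially the same path as the paper: both proofs start from formula~(\ref{diff2}), truncate the integral at $v=v_0+ie$ (where $G^v$ drops inside $G[i]$), peel off the constant contribution from $[-1,v_0]$, and then decompose $[v_0,v_0+ie]$ into $i$ translates of $[v_0,v_0+e]$, using Sen's relation $(G^v)^p=G^{v+e}$ together with the uniform shift isomorphism to collapse the sum into a geometric series in $p^d$. The only difference is cosmetic: the paper works with $v_K(\frakD_{K[i]/K})$ and multiplies by $e_{K[i]/K}$ at the very end, whereas you push $[G:G[i]]$ into the integrand immediately via the identity $[G:G[i]]\cdot[G^v:G[i]]^{-1}=[G:G^v]$; this removes the index $i$ from the integrand and lets the geometric sum run in positive powers $p^{dj}$ rather than $p^{-dj}$, but the two computations are term-for-term equivalent.
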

		\begin{proof}
			Note that $G^vG[i]=G^v$ when $v\leq v_0+ie$ and is trivial otherwise. We proceed with (\ref{diff2}),
			\begin{align*}
				v_{K}(\frakD_{K[i]/K})	&=	\int_{-1}^\infty\left(	1-[G^vG[i]:G[i]]^{-1}	\right)dv	\\
					&=	\int_{-1}^{v_0+ie}\left(	1-[G^v:G[i]]^{-1}	\right)dv	\\
					&=	ie+v_0+1-\int_{-1}^{v_0}[G^v:G[i]]^{-1}dv-\int_{v_0}^{v_0+ie}[G^v:G[i]]^{-1}dv.
			\end{align*}
			Now
			\begin{align*}
				\int_{-1}^{v_0}[G^v:G[i]]^{-1}dv	&=	[G[0]:G[i]]^{-1}\int_{-1}^{v_0}[G^v:G[0]]^{-1}dv	\\
					&=	B'p^{-di}
			\end{align*}
			where $B'$ is constant. For the remaining term, we split the interval $[v_0,v_0+ie]$ into $i$ intervals, each of length $e$
			\begin{align*}
				\int_{v_0}^{v_0+ie}	[G^v:G[i]]^{-1}dv	&=	\sum_{k=0}^{i-1}\int_0^e [G^{v_0+ke+v}:G[i]]^{-1}dv	\\
					&=	\sum_{k=0}^{i-1}\int_0^e [G^{v_0+ke+v}:G[k+1]]^{-1}[G[k+1]:G[i]]^{-1}dv	\\
					&=	\sum_{k=0}^{i-1}p^{-d(i-k-1)}\int_0^e\frac{1}{[G^{v_0+ke+v}:G[k+1]]}dv	\\
					&=	\left(\int_0^e\frac{1}{[G^{v_0+v}:G[1]]}dv\right)\sum_{k=0}^{i-1}p^{-d(i-k-1)}	\tag{\textasteriskcentered}\label{int}\\
					&=	A'\frac{p^{-di}-1}{p^{-d}-1},
			\end{align*}
			where in (\ref{int}), we use the fact that the $k^\text{th}$ shift isomorphism
			\begin{align*}
				G[0]/G[1]	\to	G[k]/G[k+1]
			\end{align*}
			induces an isomorphism
			\begin{align*}
				G^{v_0+v}/G[1]	\to	G^{v_0+ke+v}/G[k+1].
			\end{align*}
			Finally, we obtain
			\begin{align*}
				v_{K[i]}(\frakD_{K[i]/K})	&=	e_{K[i]/K}v_{K}(\frakD_{K[i]/K})	\\
					&=	e_{K[0]}ip^{di}+Ap^{di}+B
			\end{align*}
			for some constants $A,B$.
		\end{proof}
		
		The full result will follow by comparing the towers $K(i)$ and $K[i]$ in high degrees. Let $i_0$ be sufficiently large so that $G[0]\geq G(i_0)$. Assume that $c>e$, so that there is a chain of inclusions
		\begin{align*}
			G\left(i_0-\lceil\tfrac{2c}{e}\rceil\right)	\geq	G^{i_0e-c}\geq	G[k_{i_0}]	\geq	G(i_0)	\geq	G^{i_0e+c}	\geq	G\left(	i_0+\lceil\tfrac{2c}{e}\rceil	\right).
		\end{align*}
		for some $k_{i_0}$. If additionally we take $i_0 > 2\lceil\tfrac{2c}{e}\rceil$ large enough, then the quotient
		\begin{align*}
			G\left(	i_0+\lceil\tfrac{2c}{e}\rceil	\right)/G\left(i_0-\lceil\tfrac{2c}{e}\rceil\right)
		\end{align*}
		is abelian, and there is a shift isomorphism induced by $p^\text{th}$ powers. Letting $k_i=k_{i_0}+(i-i_0)$, the shift isomorphism induces an isomorphism
			\begin{align}\label{shift}
				G[k_{i_0}]/G(i_0)	\to	G[k_i]/G(i).
			\end{align}
		The different is then given by
			\begin{align*}
				v_{K(i)}(\frakD_{K(i)/K})	&=	v_{K(i)}\left(	\frakD_{K(i)/K[k_i]}\frakD_{K[k_i]/K}	\right)	\\
					&=	v_{K(i)}\left(	\frakD_{K(i)/K[k_i]})+v_{K(i)}(\frakD_{K[k_i]/K}	\right).
			\end{align*}
			 The second term is straightforward to compute: the valuation $v_{K(i)}$ can be rewritten as $[K(i):K[k_{i}]]v_{K[k_i]}=[K(i_0):K[k_{i_0}]]v_{K[k_i]}$. Thus
			\begin{align*}
				v_{K(i)}(\frakD_{K[k_i]/K}	)
					&=	[K(i_0):K[k_{i_0}]](e_{K[0]}k_ip^{dk_i}+A'p^{dk_i}+B')\\
					&=	p^{d(i_0-k_{i_0})}\frac{[K(0):K]}{[K[0]:K]}(e_{K[0]}k_ip^{d(k_{i_0}-i_0)}p^{di}+A''p^{di}+B').
			\end{align*}
			As expected, the leading term is
			\begin{align*}
				\frac{[K(0):K]}{[K[0]:K]}e_{K[0]}ip^{di}	&=	e_{K(0)}ip^{di}.
			\end{align*}
			Note that the shift isomorphism (\ref{shift}) does not preserve ramification groups in general. In order to compute $\frakD_{K(i)/K[k_i]}$, we need to relate the ramification groups of $G[k_{i_0}]/G(i_0)$ and $G[k_i]/G(i)$.
			
			\begin{lemma}\label{lemmaa}
				For $v\gg 0$, the $p^{i-i_0}$-power map induces an isomorphism
				\begin{align*}
					(G[k_{i_0}]/G(i_0))^v	\to	(G[k_i]/G(i))^x,
				\end{align*}
				where $x=\psi_{K[k_i]/K}(\varphi_{K[k_{i_o}]/K}(v)+(i-i_0)e)$. More precisely, this occurs when $x\geq \psi_{K[k_i]/K}(v_0+(i-i_0)e)$.
			\end{lemma}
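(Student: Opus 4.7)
The plan is to identify the ramification groups $H_1^v := (G[k_{i_0}]/G(i_0))^v$ and $H_2^x := (G[k_i]/G(i))^x$ explicitly as images of ambient ramification groups $G^w$ of $\mathrm{Gal}(L/K)$, and then to track how the $p^{i-i_0}$-power map acts on these images via Sen's theorem.

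First, I would apply Herbrand's theorem to the Galois tower $K \subseteq K[k_{i_0}] \subseteq K(i_0)$ (all Galois, since $G(i_0)$ and $G[k_{i_0}] = G^{v_0+k_{i_0}e}$ are both normal in $G$). Combining compatibility of lower numbering with subgroups, the Herbrand formula $(G/G(i_0))^w = G^w G(i_0)/G(i_0)$, and the composition rule $\varphi_{K(i_0)/K} = \varphi_{K[k_{i_0}]/K} \circ \varphi_{K(i_0)/K[k_{i_0}]}$ from Theorem~\ref{t:diff}, one obtains
\[
H_1^v \;=\; H_1 \cap (G/G(i_0))^{\varphi_{K[k_{i_0}]/K}(v)} \;=\; \bigl(G[k_{i_0}] \cap G^{\varphi_{K[k_{i_0}]/K}(v)}\bigr)\,G(i_0)/G(i_0).
\]
Once $\varphi_{K[k_{i_0}]/K}(v)$ exceeds the index defining $G[k_{i_0}]$, the intersection collapses and gives $H_1^v = G^{\varphi_{K[k_{i_0}]/K}(v)}\,G(i_0)/G(i_0)$. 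An identical argument yields $H_2^x = G^{\varphi_{K[k_i]/K}(x)}\,G(i)/G(i)$ in the analogous range.

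Second, Sen's theorem states $(G^w)^p = G^{w+e}$ for $w$ large, which iterates to $(G^w)^{p^{i-i_0}} = G^{w + (i-i_0)e}$. Under the shift isomorphism~(\ref{shift}), the $p^{i-i_0}$-power map therefore sends $G^w G(i_0)/G(i_0)$ bijectively onto $G^{w+(i-i_0)e}G(i)/G(i)$. Substituting $w = \varphi_{K[k_{i_0}]/K}(v)$ into the Herbrand descriptions and matching the image to $H_2^x$ forces
\[
\varphi_{K[k_i]/K}(x) \;=\; \varphi_{K[k_{i_0}]/K}(v) + (i-i_0)e,
\]
which after applying $\psi_{K[k_i]/K}$ yields precisely the claimed formula for $x$.

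The main obstacle is aligning the various thresholds consistently in $i$. The Herbrand collapse for $H_1^v$ demands $\varphi_{K[k_{i_0}]/K}(v)$ be large, the analogous collapse for $H_2^x$ demands $\varphi_{K[k_i]/K}(x)$ be large, and Sen's shift $(G^w)^p = G^{w+e}$ must hold at every intermediate level from $w$ up to $w+(i-i_0)e$. After the substitution $\varphi_{K[k_i]/K}(x) = \varphi_{K[k_{i_0}]/K}(v) + (i-i_0)e$, all three conditions reduce uniformly to a single lower bound on $x$ of the stated form $x \geq \psi_{K[k_i]/K}(v_0+(i-i_0)e)$, provided $v_0$ was chosen (or enlarged) at the outset so that Sen's theorem applies on all of $G^{v_0}$. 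I expect this bookkeeping, rather than any conceptual ingredient, to be the bulk of the actual write-up.
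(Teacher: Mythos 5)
Your approach is the same as the paper's: express the ramification groups of both quotients in terms of the ambient $G^w$ via Herbrand's theorem and the composition law for $\varphi$, then use Sen's theorem $(G^w)^{p^{i-i_0}}=G^{w+(i-i_0)e}$ to track how the shift isomorphism moves these subgroups. However, your extra ``collapse'' step $(G[k_{i_0}]\cap G^w) = G^w$ introduces a threshold mismatch that you then incorrectly claim away. Collapsing requires $w=\varphi_{K[k_{i_0}]/K}(v)\geq v_0+k_{i_0}e$, which under the substitution becomes $x\geq\psi_{K[k_i]/K}(v_0+k_ie)$; this is strictly stronger than the stated bound $x\geq\psi_{K[k_i]/K}(v_0+(i-i_0)e)$ (equivalently $w\geq v_0$), since $k_i=k_{i_0}+(i-i_0)$ with $k_{i_0}>0$. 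So ``all three conditions reduce uniformly to a single lower bound on $x$ of the stated form'' is not correct; enlarging $v_0$ does not help because the obstruction is $k_{i_0}$, not $v_0$. The paper sidesteps this entirely by never collapsing the intersection: it shows directly that the $p^{i-i_0}$-power shift sends $(G^w\cap G[k_{i_0}])G(i_0)/G(i_0)$ to $(G^{w+(i-i_0)e}\cap G[k_i])G(i)/G(i)$, which holds as soon as Sen's shift is valid, i.e.\ for $w\geq v_0$. Your argument can be repaired by observing that in the leftover range $v_0\leq w<v_0+k_{i_0}e$ one has $G^w\supseteq G[k_{i_0}]$, so $(G[k_{i_0}]/G(i_0))^v$ and $(G[k_i]/G(i))^x$ are both the full groups and the claim is trivial — but as written the bookkeeping sentence is wrong.
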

			\begin{proof}
				For any $i$, the $v^\text{th}$ ramification group of $G[k_i]/G(i)$ is
				\begin{align*}
					(G[k_i]/G(i))^v	&=	G[k_i]^vG(i)/G(i)	\\
						&=	G[k_i]_{\psi_{L/K[k_i]}(v)}G(i)/G(i)	\\
						&=	(G_{\psi_{L/K[k_i]}(v)} \cap G[k_i])G(i)/G(i)	\\
						&=	(G^{\varphi_{L/K}(\psi_{L/K[k_i]}(v))}\cap G[k_i])G(i)/G(i)	\\
						&=	(G^{\varphi_{L/K[k_i]}(v)}\cap G[k_i])G(i)/G(i).
				\end{align*}
				The $p^{i-i_0}$-power map induces an isomorphism $G[k_{i_0}]/G(i_0)\to G[k_i]/G(i)$. When $\varphi_{L/K[k_{i_0}]}(v) \geq v_0$, this maps $(G[k_{i_0}]/G(i_0))^v$ to
				\begin{align*}
					&(G^{\varphi_{L/K[k_{i_0}]}(v)+(i-i_0)e}\cap G[k_i])G(i)/G(i)	\\
					&=	(G[k_i]/G(i))^{\psi_{K[k_i]/K}(\varphi_{L/K[k_{i_0}]}(v)+(i-i_0)e)}	\\
					&=	(G[k_i]/G(i))^x
				\end{align*}
				as desired. Note that the condition $\varphi_{L/K[k_{i_0}]}(v) \geq v_0$ is equivalent to the statement that $x\geq \psi_{K[k_i]/K}(v_0+(i-i_0)e)$.
			\end{proof}
			
			We will need a description of the functions $\psi_{K[k_i]/K}$:
			
			\begin{lemma}\label{l:psi}
				Let $x\geq v_0$ and write $x=v_0+je+x_0$, where $x_0 \in [0,e)$. Then for any $i\geq i_0$
				\begin{align*}
					\psi_{K[k_i]/K}(x)	&=	\begin{cases}
							R_1p^{dj}g(x_0)+R_2p^{dj}+R_3	&	x<v_0+k_ie	\\
							R_1'p^{di}((j-i)e+x_0)+R_2'p^{di}+R_3'	&	x\geq v_0+k_ie
						\end{cases},
				\end{align*}
				where $g$ satisfies $g(0)=0$. When it exists, the derivative is given by
				\begin{align*}
					\psi_{K[k_i]/K}(x)	&=	\begin{cases}
							R_1[G^{v_0}:G^{v_0+x_0}]p^{dj}	&	x\leq v_0+k_ie	\\
							R_1'p^{di}	&	x> v_0+k_ie
						\end{cases}.
				\end{align*}
			\end{lemma}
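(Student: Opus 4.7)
The plan is to compute $\psi_{K[k_i]/K}$ via its derivative and integrate piecewise, exploiting the structure imposed by Sen's theorem. The starting point is the standard formula
\begin{align*}
	\psi'_{K[k_i]/K}(v)	&=	[(G/G[k_i])_0	:	(G/G[k_i])^v]	=	[G	:	G^vG[k_i]],
\end{align*}
valid where the derivative exists (we are in the totally ramified setting, so $(G/G[k_i])_0=G/G[k_i]$). Since $G[k_i]=G^{v_0+k_ie}$, the ramification groups $G^v$ either contain or are contained in $G[k_i]$ depending on whether $v\leq v_0+k_ie$ or $v>v_0+k_ie$. In the first range we obtain $\psi'_{K[k_i]/K}(v)=[G:G^v]$, and in the second range we obtain the constant value $[G:G[k_i]]$. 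The piecewise structure in the Lemma thus emerges from this single transition.

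Next I would compute $[G:G^v]$ explicitly for $v=v_0+je+x_0\in[v_0,v_0+k_ie]$ with $x_0\in[0,e)$. Using Sen's statement that $(G^w)^p=G^{w+e}$ for $w\gg 0$, together with the fact that $G^{v_0}$ is uniform of dimension $d$, one gets $[G^{v_0}:G^{v_0+je}]=p^{dj}$; applying the shift isomorphism again yields
\begin{align*}
	[G^{v_0+je}:G^{v_0+je+x_0}]	&=	[G^{v_0}:G^{v_0+x_0}].
\end{align*}
Combined with $R_1:=[G:G^{v_0}]$, this gives $\psi'_{K[k_i]/K}(v)=R_1[G^{v_0}:G^{v_0+x_0}]p^{dj}$, which is precisely the stated formula for the derivative in the first range and is independent of $i$. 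For $v>v_0+k_ie$ the derivative is the constant $R_1p^{dk_i}$; using $k_i=k_{i_0}+(i-i_0)$, so that $k_i-i$ is an $i$-independent constant, one may factor out $p^{di}$ and absorb the remaining factor into $R_1'$.

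To obtain $\psi$ itself, I would integrate. Splitting $[v_0,v_0+je+x_0]$ into $j$ intervals of length $e$ plus a last interval of length $x_0$, and noting that each integral $\int_0^e R_1[G^{v_0}:G^{v_0+t}]p^{dm}\,dt$ is $I_ep^{dm}$ for a fixed constant $I_e=\int_0^eR_1[G^{v_0}:G^{v_0+t}]\,dt$, gives
\begin{align*}
	\psi_{K[k_i]/K}(v_0+je+x_0)	&=	\psi_{K[k_i]/K}(v_0)	+	I_e\,\frac{p^{dj}-1}{p^d-1}	+	R_1p^{dj}g(x_0),
\end{align*}
where $g(x_0):=\int_0^{x_0}[G^{v_0}:G^{v_0+t}]\,dt$ satisfies $g(0)=0$. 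Collecting terms yields $R_1p^{dj}g(x_0)+R_2p^{dj}+R_3$ with $R_2=I_e/(p^d-1)$ and $R_3=\psi_{K[k_i]/K}(v_0)-R_2$; crucially, all three constants are independent of $i$, since $\psi_{K[k_i]/K}$ depends on $v$-values below $v_0$ only through the ramification groups $G^v$ with $v\leq v_0$ and the common structure of $G$. For $x\geq v_0+k_ie$, linearity of $\psi$ past $v_0+k_ie$ together with the value $\psi_{K[k_i]/K}(v_0+k_ie)=R_2p^{dk_i}+R_3$ gives $\psi_{K[k_i]/K}(x)=R_1p^{dk_i}(x-v_0-k_ie)+R_2p^{dk_i}+R_3$; substituting $x=v_0+je+x_0$ and replacing $p^{dk_i}$ by $p^{d(k_{i_0}-i_0)}p^{di}$ recasts this in the form $R_1'p^{di}((j-i)e+x_0)+R_2'p^{di}+R_3'$ with constants depending only on $i_0,k_{i_0}$ and the earlier constants.

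The main obstacle is the constant bookkeeping in the second range: one must check that the new coefficients $R_1',R_2',R_3'$ come out independent of $i$. This is where the fact that $k_i-i=k_{i_0}-i_0$ is an $i$-independent constant (built into the definition $k_i=k_{i_0}+(i-i_0)$) does the work; every apparent $i$-dependence either collapses into this constant or gets absorbed into the overall $p^{di}$ factor. The structural core of the argument, namely the existence of $v_0$ and the shift isomorphism on $G^{v_0+je}/G^{v_0+(j+1)e}$, is already established in the preceding lemma, so the proof should be a careful but mechanical integration.
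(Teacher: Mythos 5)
Your proposal is correct and takes essentially the same approach as the paper: you express $\psi_{K[k_i]/K}$ as the integral of the index function (equivalently, compute the derivative $[G:G^vG[k_i]]$ and integrate), split at $v_0+k_ie$ using $G[k_i]=G^{v_0+k_ie}$, and then reduce the first-range integral over $[v_0,v_0+je+x_0]$ to a geometric series plus a $g(x_0)$ term via the shift isomorphism, while the second range is linear. The constant bookkeeping you carry out (factoring $p^{dk_i}=p^{d(k_{i_0}-i_0)}p^{di}$ and absorbing the $(k_{i_0}-i_0)$-shift into $R_2'$) matches what the paper does implicitly.
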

			\begin{proof}
				We can compute directly
				\begin{align}
					\psi_{K[k_i]/K}(x)	&=	\int_0^x [G/G[k_i]:(G/G[k_i])^w]	dw	\nonumber\\
						&=	\int_0^{v_0+je+x_0} [G/G[k_i]:G^wG[k_i]/G[k_i]]dw\label{phiint}
				\end{align}
				In the first case (when $x\leq v_0+k_ie$), $G^w \geq G^{v_0+k_ie}=G[k_i]$, and the integrand simplifies to $[G:G^w]$. We split the integral into the intervals $[0,v_0]$, $[v_0,v_0+je]$, and $[v_0+je,v_0+je+x_0]$. Now
				\begin{align*}
					\int_0^{v_0}[G:G^w]dw = \psi_{L/K}(v_0)
				\end{align*}
				is constant, and
				\begin{align*}
					\int_{v_0+je}^{v_0+je+x_0}[G:G^w]dw	&=	[G:G^{v_0}]p^{dj}\int_{v_0+je}^{v_0+je+x_0}[G[j]:G^w]dw	\\
						&=	[G:G^{v_0}]p^{dj}\int_{v_0}^{v_0+x_0}[G[0]:G^w]dw	\\
						&=	[G:G^{v_0}]p^{dj}g(x_0)
				\end{align*}
				(note that we take advantage of the shift isomorphism). The middle interval can be further split into $j-1$ subintervals:
				\begin{align*}
					\int_{v_0+}^{v_0+je}[G:G^w]dw	&=	\sum_{k=0}^{j-1}\int_{v_0+ke}^{v_0+(k+1)e} [G:G^w]dw	\\
						&=	\sum_{k=0}^{j-1}[G:G^{v_0}]p^{dk}\int_{v_0+ke}^{v_0+(k+1)e} [G[k]:G^w]dw	\\
						&=	\left(	\int_{v_0}^{v_0+e} [G[0]:G^w]dw	\right)[G:[0]]\sum_{k=0}^{j-1}p^{dk}	\\
						&=	g(e)[G[0]:G^w]\frac{p^{dj}-1}{p-1}.
				\end{align*}
				This completes the first case. The second case is simpler: when $x> v_0+k_ie$, the integral (\ref{phiint}) becomes
				\begin{align*}
					&\psi_{K[k_i]/K}(v_0+k_ie)+\int_{v_0+k_ie}^{v_0+je+x_0} [G/G[k_i]:G^wG[k_i]/G[k_i]]dw	\\
					&=	R_2p^{dk_i}+R_3+\int_{v_0+k_ie}^{v_0+je+x_0}[G:[k_i]]dw	\\
					&=	R_2p^{dk_i}+R_3+[G:G[0]]p^{dk_i}((j-k_i)e+x_0).
				\end{align*}
				The functions $\psi_{K[k_i]/K}$ are continuous and piecewise differentiable, and the computation of their derivatives is straightforward from the above formulas.
			\end{proof}
			
			\begin{proposition}
				The tower $K(i)$ is stable.
			\end{proposition}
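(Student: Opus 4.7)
The plan is to use the decomposition
\begin{align*}
v_{K(i)}(\frakD_{K(i)/K}) = v_{K(i)}(\frakD_{K(i)/K[k_i]}) + v_{K(i)}(\frakD_{K[k_i]/K})
\end{align*}
supplied by Theorem~\ref{t:diff}(ii). The second summand has already been computed to equal $e_{K(0)} i p^{di} + A' p^{di} + B'$, so it remains to show that $v_{K(i)}(\frakD_{K(i)/K[k_i]})$ has the form $A'' p^{di} + B''$ for constants $A'', B''$; summing the two then yields the polynomial formula of Theorem~\ref{t:main2}.

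To handle the remaining term, I would apply formula~(\ref{diff}) to $K(i)/K[k_i]$, using that $M := [G[k_i]:G(i)] = [G[k_{i_0}]:G(i_0)]$ is independent of $i$ by the shift isomorphism~(\ref{shift}). Compatibility of upper numbering with subgroups and quotients yields $(G[k_i]/G(i))^x = (G^{\varphi_{K[k_i]/K}(x)} \cap G[k_i])G(i)/G(i)$, and I would split the integral $\int_{-1}^\infty (1-|(G[k_i]/G(i))^x|^{-1}) dx$ at the cutoff $X_i := \psi_{K[k_i]/K}(v_0 + (i-i_0)e)$ appearing in Lemma~\ref{lemmaa}. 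On the left piece $x \in [-1, X_i]$, one has $\varphi_{K[k_i]/K}(x) \leq v_0 + (i-i_0)e$, hence $G^{\varphi_{K[k_i]/K}(x)} \supseteq G[i-i_0] \supseteq G[k_i]$ (using $k_{i_0} \geq 0$), so the integrand is identically $1 - M^{-1}$ and the contribution to $v_{K(i)}(\frakD_{K(i)/K[k_i]})$ is $(M-1)(X_i+1)$. Since Lemma~\ref{l:psi} gives $X_i = R_2 p^{d(i-i_0)} + R_3$, this piece contributes a clean $A p^{di} + B$.

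On the right piece $x \geq X_i$, the substitution $x = \psi_{K[k_i]/K}(\varphi_{K[k_{i_0}]/K}(v) + (i-i_0)e)$ from Lemma~\ref{lemmaa} identifies $(G[k_i]/G(i))^x$ with $(G[k_{i_0}]/G(i_0))^v$. The Jacobian $\psi'_{K[k_i]/K}(\varphi_{K[k_{i_0}]/K}(v) + (i-i_0)e) \cdot \varphi'_{K[k_{i_0}]/K}(v)$ carries an explicit $p^{di}$ factor in both regimes of Lemma~\ref{l:psi}, while the remaining $v$-dependent integrand is supported on a bounded interval (since $(G[k_{i_0}]/G(i_0))^v$ eventually becomes trivial) and evaluates to an $i$-independent constant. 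The right piece thus contributes a constant multiple of $p^{di}$. Summing the two pieces gives $v_{K(i)}(\frakD_{K(i)/K[k_i]}) = A'' p^{di} + B''$ as required. The main bookkeeping hurdle will be checking the uniform $p^{di}$-factorization of $\psi'_{K[k_i]/K}$ across both cases of Lemma~\ref{l:psi}, and verifying that once this factor is extracted, the residual $v$-integrand genuinely has no hidden $i$-dependence.
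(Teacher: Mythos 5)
Your proposal is correct and takes essentially the same route as the paper: the same decomposition of the different, the same split of the ramification integral at $\psi_{K[k_i]/K}(v_0+(i-i_0)e)$, and the same use of Lemmas~\ref{lemmaa} and~\ref{l:psi} (your direct evaluation of the constant piece via $X_i$ is just a streamlining of the paper's subinterval computation). The bookkeeping you flag does go through: in both regimes of Lemma~\ref{l:psi} the Jacobian $\psi'_{K[k_i]/K}(\varphi_{K[k_{i_0}]/K}(v)+(i-i_0)e)\,\varphi'_{K[k_{i_0}]/K}(v)$ equals exactly $p^{d(i-i_0)}$, so the residual $v$-integrand is $i$-independent and supported on a bounded interval, as you claim.
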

			\begin{proof}
				For brevity, write $H(i)=\Gal(K(i)/K[k_i])=G[k_i]/G(i)$ and $\psi_i=\psi_{K[k_i]/K}$. In light of the preceding discussion, all that remains is the computation of
				\begin{align}\label{difffinal}
					v_{K(i)}(	\frakD_{K(i)/K[k_i]})	&=	|H(i)|\int_{-1}^\infty(1-|H(i)^x|^{-1})dx.
				\end{align}
				Note that $|H(i)|=|H(i_0)|$ is independent of $i$. We will split the integral at the point $x'=\psi_{K[k_i]/K}(v_0+(i-i_0)e)$. Note that $x'<v_0+k_ie$, so if $x<x'$, $G^x \cap G[k_i] = G[k_i]$. The first integral is then
				\begin{align*}
					\int_{-1}^{\psi_i(v_0+(i-i_0)e)}(1-|H(i)^x|^{-1})dx	&=	\int_{-1}^{v_0+(i-i_0)e}(1-|H(i)^{\psi_i(x)}|^{-1})\psi_i'(x)dx
				\end{align*}
				using a standard change of variables. Following the proof of Lemma~\ref{lemmaa},
				\begin{align*}
					H(i)^{\psi_i(x)}=(G^x\cap G[k_i])G(i)/G(i) = G[k_i]/G(i) = H(i).
				\end{align*}
				Therefore, we obtain
				\begin{align*}
					\int_{-1}^{v_0+(i-i_0)e}(1-|H(i)|^{-1})\psi_i'(x)dx	&=	(1-|H(i_0)|^{-1})\int_{-1}^{v_0+(i-i_0)e}\psi_i'(x)dx.
				\end{align*}
				This remaining integral can be evaluated by splitting up the interval $[v_0,v_0+(i-i_0)e]$ into $i-i_0$ pieces, as in the proof of Lemma~\ref{l:psi}. We omit the details, but note that the result is of the form $Ap^{di}+B$ for constants $A,B$. Thus all that remains is the second half of the integral in (\ref{difffinal}). In this case, $x>x'$ and Lemma~\ref{lemmaa} applies. A simple verification shows that $H(i)^x=1$ if $x>\psi_i(ie+c)$, so the integral is
				\begin{align*}
					\int_{\psi_i(v_0+(i-i_0)e)}^{\psi_i(ie+c)} (1-|H(i)^x|^{-1})dx	&=	\int_{\psi_i(v_0+(i-i_0)e)}^{\psi_i(ie+c)} (1-|H(i_0)^{v(x)}|^{-1})dx,
				\end{align*}
				where $v(x)$ and $x$ are related as in Lemma~\ref{l:psi}. Again, we will apply a change of variables:
				\begin{align*}
					&\int_{v_0+(i-i_0)e}^{ie+c} (1-|H(i_0)^{v(\psi_i(u))}|^{-1})\psi_i'(u)du	\\
						&=	\int_{v_0+(i-i_0)e}^{ie+c} (1-|H(i_0)^{u-(i-i_0)e}|^{-1})\psi_i'(u)du	\\
						&=	\int_{v_0}^{c+ei_0} (1-|H(i_0)^u|^{-1})\psi_i'(u+(i-i_0)e)du
				\end{align*}
				Note that $u+(i-i_0)e\leq v_0+k_ie$ if and only if $u\leq v_0+k_{i_0}$, so we will split the integral at this point. The first piece is
				\begin{align*}
					\int_{v_0}^{v_0+k_{i_0}e} (1-|H(i_0)^u|^{-1})R_1p^{d(i-i_0)}du	&=	Ap^{di}
				\end{align*}
				for some constant $A$. The second piece is
				\begin{align*}
					\int_{v_0+k_{i_0}e}^{c+ei_0} (1-|H(i_0)^u|^{-1})R_1'p^{di}du 	&=	Bp^{di}
				\end{align*}
				for some $B$. This completes the proof.
			\end{proof}
		
	\bibliographystyle{plainnat}
	\bibliography{mybib}
	\nocite{*}
	
\end{document}